\newtheorem{theorem}{Theorem}%[section]
\newtheorem{corollary}[theorem]{Corollary}
\newtheorem{observation}{Observation}
\newtheorem{lemma}{Lemma}
\newcommand{\Z}{\mbox{$\mathbb Z$}}
\newcommand{\M}{\mbox{$\mathcal M$}}
\begin{document}
\title{Flow Extensions and Group Connectivity with Applications }

\author{ Jiaao Li\thanks{Research  supported by  National Natural Science Foundation of China (No. 11901318), Natural Science Foundation of Tianjin (No. 19JCQNJC14100) and  the Fundamental Research Funds for the Central Universities, Nankai University (No. 63191425)}\\\small School of Mathematical Sciences and LPMC, Nankai University, Tianjin 300071,  China\\\small Email: lijiaao@nankai.edu.cn
}

\date{}
\maketitle

\begin{abstract}
We study the flow extension of graphs, i.e., pre-assigning a partial flow on the edges incident to a given vertex and aiming to extend to the entire graph. This is closely related to Tutte's $3$-flow conjecture(1972) that every $4$-edge-connected graph admits a nowhere-zero $3$-flow and a $\Z_3$-group connectivity conjecture(3GCC) of Jaeger, Linial, Payan, and Tarsi(1992) that every $5$-edge-connected graph $G$ is $\Z_3$-connected. Our main results show that these conjectures are equivalent to their natural flow extension versions and present some applications.
The $3$-flow case gives an alternative proof of Kochol's result(2001) that Tutte's $3$-flow conjecture is equivalent to its restriction on $5$-edge-connected graphs and is implied by the 3GCC. It also shows a new fact that Gr{\"o}tzsch's  theorem (that triangle-free planar graphs are $3$-colorable) is equivalent to its seemly weaker girth five case that  planar graphs of grith $5$ are $3$-colorable.
Our methods  allow to verify 3GCC for graphs with crossing number one, which is in fact reduced to the planar case proved by Richter, Thomassen and Younger(2017). Other equivalent versions of 3GCC and related partial results are obtained as well. 
\\[2mm]
\textbf{Keywords:} nowhere-zero flow; $3$-flow conjecture; flow extension; group connectivity
\\[2mm] \textbf{AMS Subject Classification (2010):} 05C15, 05C21, 05C40
\end{abstract}

\section{Introduction}

We consider finite graphs without loops, but permitting parallel edges.  {A vertex of degree $k$ is called a {\em $k$-vertex}.}  An edge-cut of size $k$ is called a {\em $k$-cut} for convenience, and basically no vertex-cut would be involved in this paper. A graph is {\em essentially $k$-edge-connected} if for any $t<k$, every $t$-cut isolates a vertex. In  a graph $G$, a function $\beta : V(G) \rightarrow \mathbb{Z}_3$ is called a {\em boundary function} of $G$ if $\sum_{x\in V(G)}\beta(x)=0$ in $\Z_3$. Let $Z(G, \Z_3)$ be the set of all  boundary functions of $G$. We call an orientation $D$ of $G$ a {\em $\beta$-orientation} if it holds that $d^+_D(v)-d^-_D(v)= \beta(v)$ in $\mathbb{Z}_3$ for every vertex $v\in V(G)$.  The special case of $\beta$-orientation with $\beta(x)=0$ in $\mathbb{Z}_3$ for every vertex $x\in V(G)$ is known as a  {\em mod $3$-orientation} of $G$. It is well-known (cf.\cite{LTWZ13,Tutt66,Zhan97}) that  searching for mod $3$-orientations is equivalent to finding nowhere-zero $3$-flows in graphs. Tutte's $3$-Flow Conjecture (abbreviated as 3FC) in 1972(see \cite{BoMu08})  is as follows.

\begin{center}
{\bf $3$-Flow Conjecture (3FC):} {\em Every $4$-edge-connected graph admits a nowhere-zero $3$-flow.}
\end{center}

The 3FC restricted to planar graphs is the dual of Gr{\"o}tzsch's $3$-Coloring Theorem (3CT) that every triangle-free planar graph is $3$-colorable. Applying the famous coloring extension techniques, Thomassen \cite{Thom94,Thom95,Thom03} presented short proofs of  Gr{\"o}tzsch's 3CT and extended to its list version, as well as obtained his elegant $5$-list-coloring theorem \cite{Thom94list5}. Even before Thomassen's coloring extension proofs, Steinberg and Younger \cite{SY89} employed a flow extension method to confirm 3FC for planar  and projective planar graphs, that is to pre-assign certain flow value to edges incident a given vertex and then to extend it to the entire graph. 
Motivated by the results of Steinberg and Younger, we say that a graph $G$ is {\em $\M_3$-extendable at  $z\in V(G)$} if for any pre-orientation $D_{0}$ of $\partial_G(z)$ with $d_{D_{0}}^+(z) \equiv d_{D_{0}}^-(z) \pmod 3$,
$D_{0}$ can be extended to a mod $3$-orientation $D$ of $G$.

\medskip
Kochol \cite{Koch01} obtained some interesting equivalent versions of the 3FC.
\begin{theorem}\label{kochol}(Kochol \cite{Koch01}) The following  are equivalent.
\\
(i) {\em(3FC)} Every $4$-edge-connected graph admits a nowhere-zero $3$-flow.
\\
(ii) Every $5$-edge-connected graph admits a nowhere-zero $3$-flow.
\\
(iii) Every $5$-edge-connected  graph is $\M_3$-extendable at every $5$-vertex.
%\\
%(iv) Every $4$-edge-connected graph with each vertex of degree $4$ or $5$ is $\M_3$-extendable at every vertex.
%\\
%(v) Every bridgeless graph with at most three $3$-cuts admits a nowhere-zero $3$-flow.
\end{theorem}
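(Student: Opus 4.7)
The plan is to close the cycle of implications. The direction $(i)\Rightarrow(ii)$ is immediate, since every $5$-edge-connected graph is $4$-edge-connected. The two remaining implications, $(ii)\Rightarrow(iii)$ and $(iii)\Rightarrow(i)$, will each rest on a graph enlargement with carefully controlled edge-connectivity.

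For $(ii)\Rightarrow(iii)$, suppose $G$ is $5$-edge-connected with a $5$-vertex $z$ whose neighbors are $u_1,\dots,u_5$, and let $D_0$ be a pre-orientation of $\partial_G(z)$ with $d_{D_0}^+(z)\equiv d_{D_0}^-(z)\pmod 3$; by symmetry assume $d_{D_0}^+(z)=4$ and $d_{D_0}^-(z)=1$. I would delete $z$ and attach at $u_1,\dots,u_5$ a small rigid gadget $H$ designed so that (a) the resulting graph $\widetilde G$ remains $5$-edge-connected, and (b) in every mod $3$-orientation of $\widetilde G$, each boundary edge at $u_i$ is forced into the direction prescribed by $D_0$. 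Applying (ii) to $\widetilde G$ then produces an orientation whose boundary data agree with $D_0$, giving the required extension. A suitable $H$ can be built from a small $5$-edge-connected graph with five distinguished boundary vertices, using local parity constraints at those terminals to pin down each direction.

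For $(iii)\Rightarrow(i)$, let $G$ be $4$-edge-connected. I would construct a $5$-edge-connected enlargement $G^+$ by introducing a new $5$-vertex $z$ together with five additional edges routed so as to destroy every $4$-edge-cut of $G$ (handling one $4$-cut at a time if needed); when $G$ is already $5$-edge-connected but has no $5$-vertex, a standard splitting at a vertex of degree $\ge 6$ creates one without breaking $5$-edge-connectivity. The construction naturally furnishes a pre-orientation of $\partial_{G^+}(z)$ that is balanced mod $3$; applying (iii) extends it to a mod $3$-orientation of $G^+$. By design, removing the auxiliary vertex and its incident edges recovers a mod $3$-orientation of $G$, equivalently a nowhere-zero $3$-flow.

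The main obstacle is the gadget in Step 1: it must simultaneously be small, preserve $5$-edge-connectivity upon insertion, and be rigid enough to dictate five independent boundary orientations --- a tight combinatorial balance. The enlargement in Step 2 is easier in spirit, essentially an edge-split or cut-routing manipulation, but one still must check both that it eliminates every $4$-cut and that the resulting mod $3$-orientation restricts back to $G$. Once the two constructions are in hand, translating orientations between $G$ and the enlargements reduces to routine parity bookkeeping.
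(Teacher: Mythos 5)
There is a genuine gap in your proof of ``(ii)$\Rightarrow$(iii)'', which is the only nontrivial new content of the theorem. Your entire argument rests on a ``small rigid gadget $H$'' with five terminals such that, after substituting it for $z$, \emph{every} mod $3$-orientation of $\widetilde G$ forces each boundary edge into the direction prescribed by $D_0$; you assert that ``a suitable $H$ can be built'' but never exhibit one, and there are strong reasons to believe none exists. First, reversing all edges of a mod $3$-orientation yields another one, so at best the gadget could force the boundary pattern up to global reversal. More seriously, a $5$-edge-connected gadget $H$ whose compatible boundary patterns are exactly $D_0$ and its reversal would, upon adding an apex vertex $w$ joined to the five terminals, produce a $5$-edge-connected graph with a $5$-vertex $w$ at which only one of the five admissible pre-orientations (up to reversal) extends --- that is, a counterexample to statement (iii) itself. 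So your construction is self-defeating: the tool you need cannot coexist with the conclusion you want. The paper circumvents exactly this obstacle by \emph{not} forcing the boundary: it glues six copies of $G-x$ along the template graph $W$ of Figure \ref{FIGW} (matching the spokes $v_0v_k$ to the minor-edge position of $D_x$), applies (ii) to the resulting $5$-edge-connected graph, and then uses the counting statement of Lemma \ref{OB: W}(i) to conclude that in \emph{any} mod $3$-orientation \emph{some} copy receives the desired boundary pattern up to reversal. That pigeonhole step over several copies of $G-x$ is the missing idea.

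Your sketch of ``(iii)$\Rightarrow$(i)'' also has two concrete defects. A single new vertex $z$ of degree $5$ can meet at most five of the pairwise disjoint $4$-critical-sets, so it cannot ``destroy every $4$-edge-cut'' of an arbitrary $4$-edge-connected graph; one must instead argue by a minimal counterexample (which Mader's lemma makes $5$-regular) and eliminate one smallest nontrivial $4$-cut at a time, contracting each side and transferring the boundary orientation, as the paper does. Moreover, your final step --- ``removing the auxiliary vertex and its incident edges recovers a mod $3$-orientation of $G$'' --- is false as stated: deleting one oriented edge at a neighbour $u_i$ of $z$ changes $d^+(u_i)-d^-(u_i)$ by $\pm 1$ and destroys the mod $3$ balance there. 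The correct bookkeeping is to delete the added edges together with a compensating pre-assigned orientation on the genuine cut edges (this is precisely why the paper contracts $A^c$, copies the orientation $D_1$ onto $\partial(x)$, and only then modifies $x$ into a $5$-vertex). As written, neither implication of your proposal goes through.
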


A graph $G$ is called {\em $\mathbb{Z}_3$-connected}
if, for every $\beta\in {Z}(G, \mathbb{Z}_{3})$, there exists a $\beta$-orientation in $G$ (i.e., an orientation
$D$ such that $d^+_D(x)-d^-_D(x)\equiv \beta(x)  \pmod {3},\forall x\in V(G)$).   This group connectivity concept was  introduced by Jaeger, Linial, Payan, and Tarsi \cite{JLPT92} as a nonhomogeneous generalization of Tutte's nowhere-zero flow theory \cite{Tutt54}.  Jaeger et al. \cite{JLPT92} posed the following $\Z_3$-Group Connectivity Conjecture, abbreviated as 3GCC.

%
%\begin{conjecture}\label{CONJ: Z3}
%  Every $5$-edge-connected graph is $\Z_3$-connected.
%\end{conjecture}

\begin{center}
\noindent{\bf$\Z_3$-Group Connectivity Conjecture (3GCC):}{\em Every $5$-edge-connected graph is $\Z_3$-connected.}
\end{center}

% Theorem   \ref{kochol} of Kochol shows Tutte's $3$-flow conjecture is equivalent to its restriction to $5$-edge-connected graphs, which particularly reveals that the truth of Conjecture \ref{CONJ: Z3} implies the validity of the $3$-flow conjecture.

The main purpose of this paper is to study some natural flow extension versions of 3FC and 3GCC, with some additional applications. In particular, using a unified approach,  we provide a new proof of Theorem \ref{kochol} (different from Kochol's $2$-sum method \cite{Koch01}), and  prove that some seemly stronger versions of 3GCC are actually equivalent to the original version, as shown in Theorem \ref{THM: TFAE} below. Furthermore, as a byproduct of the new proof of Kochol's Theorem \ref{kochol}, it also indicates that those statements are equivalent within planar graphs, which implies that, by duality,  Gr{\"o}tzsch's 3CT is exactly equivalent to its restriction on grith $5$ case. This  interesting fact seems not known before (since Kochol's arguments \cite{Koch01} need to construct nonplanar graphs).

Similar as the $\M_3$-extendability on mod $3$-orientations, there is an analogous pre-orientation  extension concept for $\Z_3$-group connectivity. This technique is notably one of the key ideas in the proof of Weak $3$-Flow Conjecture by Thomassen \cite{Thom12}, and subsequently improvement by Lov\'asz, Thomassen, Wu and Zhang \cite{LTWZ13}.   A graph is called {\em $\Z_3$-extendable at  $x$},
if  for any $\beta\in Z(G, \Z_3)$ and any pre-orientation $D_{x}$ of $\partial_G({x})$ with $d_{D_{x}}^+(x)-d_{D_{x}}^-(x)\equiv\beta (x)\pmod 3$,
$D_{x}$ can be extended to a $\beta$-orientation $D$ of $G$. A graph is {\em $\Z_3$-reduced} if it contains no $\Z_3$-connected subgraph of order at least two. We show the following statements are all equivalent to 3GCC, some of which have been appeared in \cite{HLL18} and  shown to imply the 3GCC.

\begin{theorem}\label{THM: TFAE} The following  are equivalent.
\\
(a) {\em(3GCC)} Every $5$-edge-connected graph is $\Z_3$-connected.
\\
(b-i) Every $5$-edge-connected graph is $\Z_3$-extendable at every $5$-vertex.
\\
(b-ii) Every $5$-edge-connected essentially $6$-edge-connected  graph is $\Z_3$-extendable at every  $5$-vertex.
\\
(c) Every $\Z_3$-reduced graph has minimum degree at most $4$.
\\
(d) Every $4$-edge-connected graph with at most five $4$-cuts is $\Z_3$-connected.
\end{theorem}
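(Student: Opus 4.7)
The plan is to set up a chain of implications around (a), collecting the easy directions first. The implication (b-i) $\Rightarrow$ (b-ii) is immediate as a restriction. For (c) $\Rightarrow$ (a), I would take any 5-edge-connected $G$, let $H$ be a maximal $\Z_3$-connected subgraph of order at least two; the quotient $G/H$ remains 5-edge-connected (since contraction does not reduce edge-connectivity) and is $\Z_3$-reduced by maximality of $H$, so (c) forces $\delta(G/H)\le 4$, contradicting 5-edge-connectivity unless $H=G$. The implication (d) $\Rightarrow$ (a) is immediate since 5-edge-connected graphs have no 4-cuts. For (b-i) $\Rightarrow$ (a), given a 5-edge-connected $G$ and $\beta\in Z(G,\Z_3)$, I would pick any 5-vertex $z$ (if none exists, first apply a Mader-type edge-splitting at a vertex of degree $\ge 6$ to produce one while preserving 5-edge-connectivity), preorient $\partial_G(z)$ to match the parity $d^+(z)-d^-(z)\equiv\beta(z)\pmod 3$, and extend using (b-i).

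The heart of the argument is (a) $\Rightarrow$ (b-i). Given a 5-edge-connected $G$, a 5-vertex $z$, a preorientation $D_z$ of $\partial_G(z)$, and $\beta\in Z(G,\Z_3)$ with $d^+_{D_z}(z)-d^-_{D_z}(z)\equiv\beta(z)\pmod 3$, I plan to build an auxiliary graph $G^{\ast}$ obtained from $G$ by deleting $z$ and attaching a small $\Z_3$-connected gadget at a new vertex $z^{\ast}$ joined to the five neighbors of $z$, with edge multiplicities chosen to encode $D_z$. The gadget must be designed so that (i) $G^{\ast}$ is 5-edge-connected, and (ii) there is a canonical $\beta^{\ast}\in Z(G^{\ast},\Z_3)$ whose every $\beta^{\ast}$-orientation, when restricted to $E(G)\setminus\partial_G(z)$ and combined with $D_z$, yields a $\beta$-orientation of $G$ extending $D_z$. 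Applying (a) to $G^{\ast}$ then completes the extension. The implication (b-ii) $\Rightarrow$ (b-i) is handled similarly: any 5-cut $[A,V(G)\setminus A]$ in $G$ with $|A|\ge 2$ and $z\notin A$ can be contracted on the side not containing $z$ without disturbing 5-edge-connectivity or $D_z$, so induction reduces the problem to the essentially 6-edge-connected case handled by (b-ii).

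For (a) $\Rightarrow$ (c), I would take a minimum counterexample $G_0$, a $\Z_3$-reduced graph with $\delta(G_0)\ge 5$ of smallest order. The key step is verifying that such $G_0$ must be 5-edge-connected: any edge-cut of size $\le 4$ in a $\Z_3$-reduced graph with $\delta\ge 5$ permits a local reduction on one side to produce a smaller $\Z_3$-reduced graph still of $\delta\ge 5$, contradicting minimality. Once 5-edge-connected, $G_0$ is $\Z_3$-connected by (a), contradicting its $\Z_3$-reducedness since $|V(G_0)|\ge 2$. For (a) $\Rightarrow$ (d), given a 4-edge-connected $G$ with at most five 4-cuts, I would add a small number of parallel edges to saturate each 4-cut into a cut of size $\ge 5$, producing a 5-edge-connected $G'$, then apply (a) to $G'$ and transfer the resulting orientation back to $G$ by splitting off the added edges in a way that preserves the boundary. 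The main obstacle I foresee is the gadget construction in (a) $\Rightarrow$ (b-i): it must simultaneously guarantee 5-edge-connectivity of $G^{\ast}$ regardless of which preorientation $D_z$ is given, and force the $\beta^{\ast}$-orientation on the gadget to be uniquely compatible with $D_z$, which requires a delicate choice of the gadget's internal structure and careful verification of all cuts crossing it.
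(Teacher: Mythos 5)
Your overall architecture (closing a cycle of implications, with the easy directions (b-i)$\Rightarrow$(b-ii), (c)$\Rightarrow$(a), (d)$\Rightarrow$(a) dispatched first) is reasonable, but the one direction that carries all the weight, (a)$\Rightarrow$(b-i), is not actually proved: you only list the properties a hypothetical gadget $G^{\ast}$ should have, and you yourself flag its construction as the main obstacle. The obstacle is real and, in the form you propose, insurmountable: if the gadget attached at $z^{\ast}$ is $\Z_3$-connected, its orientations are by definition flexible, so no choice of $\beta^{\ast}$ can \emph{force} the five interface edges to carry the prescribed pre-orientation $D_z$. The paper's resolution is not a local gadget but a global counting construction: it first reduces, via Lemma \ref{extendingiff}, to producing a $\beta_1$-orientation of $G-z$, then takes \emph{six} copies of $G-z$ glued along the fixed $5$-regular, $5$-edge-connected multigraph $W$ of Figure \ref{FIGW}, assigns a boundary $\beta^{\ast}$ restricting to the target boundary on each copy, and invokes a deficiency count (Lemma \ref{OB: W}(ii): every $\beta$-orientation of $W$ with $\beta\equiv 1$ has a vertex of in-degree $5$) to guarantee that at least one copy sees all five interface edges oriented inward; that copy, with the interface contracted to a single out-vertex, yields the desired extension of $D_z$. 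This pigeonhole over several copies of $G-z$ itself is the idea missing from your proposal.

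Two further steps as written would fail. For (a)$\Rightarrow$(d) you add parallel edges to saturate the $4$-cuts and then ``split off the added edges'': deleting edges does not preserve $\Z_3$-connectivity, and an orientation of the augmented graph cannot be restricted to $G$ while preserving the boundary, since the deleted edges contribute to the boundary at their ends. The paper instead proves (b-i)$\Rightarrow$(d): add one new vertex $z$ of degree $5$ sending an edge into each $4$-critical-set (these are pairwise disjoint and at most five in number by Observation \ref{OB: kedgeconnected}), check that $G'$ is $5$-edge-connected, and conclude from extendability at $z$ and Lemma \ref{extendingiff} that $G=G'-z$ is $\Z_3$-connected. Similarly, your direct (a)$\Rightarrow$(c) rests on an unspecified ``local reduction'' at a small cut; contracting or truncating at an arbitrary cut does not preserve $\Z_3$-reducedness, and with only (a) in hand one cannot conclude that one side of the cut is itself $\Z_3$-connected --- this step genuinely needs extendability at a $5$-vertex and must be routed through (b-i) or (b-ii), as the paper does by citing \cite{HLL18} for (b-ii)$\Rightarrow$(c)$\Rightarrow$(a). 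Finally, your (b-i)$\Rightarrow$(a) via Mader splitting is redundant once the cycle closes through (c) or (d), and as stated it glosses over graphs with no $5$-vertex and the fact that a complete splitting at $z$ only recovers the case $\beta(z)=0$.
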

In particular, Theorem \ref{THM: TFAE}, using equivalent statement (c), provides  another alternative proof (different from Theorem \ref{kochol}) of the fact that the validity of 3GCC implies 3FC. To see this, notice that the minimal counterexample $G$ of 3FC is $5$-regular by Mader's splitting lemma \cite{Made78} (Lemma \ref{maderlem} below). Observe also that, if $H$ is a $\Z_3$-connected subgraph of $G$, then a mod $3$-orientation of $G/H$ can be easily extended to $G$ (cf.\cite{HLL18,JLPT92,LTWZ13,Zhan97}), and so the minimal counterexample $G$ must be $\Z_3$-reduced. Thus $G$ is a $5$-regular $\Z_3$-reduced graph, a contradiction to Theorem \ref{THM: TFAE} (c).

~

Restricted to planar graphs, applying the powerful flow extension techniques, a recent result of  Richter, Thomassen and Younger \cite{RTY16} shows  3GCC and its flow extension version(Theorem \ref{THM: TFAE}(b-i))  hold for planar graphs. The techniques in proving Theorems \ref{kochol} and \ref{THM: TFAE} allow us to obtain more equivalent statements of the Richter-Thomassen-Younger result, and to extend it to graphs with crossing number one.
\begin{theorem}\label{THM:pla}  Each of the following holds.\\
(i)(\cite{LL06,RTY16}) Every $5$-edge-connected planar graph is $\Z_3$-connected.
\\
(ii) (\cite{RTY16}) Every $5$-edge-connected planar graph is $\Z_3$-extendable at every $5$-vertex.
\\
(iii) Every $\Z_3$-reduced planar graph has minimum degree at most $4$.
\\
(iv) Every $5$-edge-connected graph with crossing number at most one is $\Z_3$-connected.
\end{theorem}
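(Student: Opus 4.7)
Parts (i) and (ii) are cited from prior work, so this proposal focuses on (iii) and (iv). For (iii), the strategy is to observe that the equivalence chain of Theorem~\ref{THM: TFAE} localises to the planar class. Contracting a $\Z_3$-connected subgraph preserves planarity, so (iii)$\Rightarrow$(i) follows by taking a minimum-order $\Z_3$-reduced counterexample. For the converse, suppose $G$ is a minimum-order planar $\Z_3$-reduced graph with $\delta(G)\ge 5$. If $G$ is 5-edge-connected, (i) forces $\Z_3$-connectivity, contradicting the $\Z_3$-reduced hypothesis. Otherwise $G$ has a non-trivial edge cut of size at most four, and we run the same cut-reduction used for (a)$\Leftrightarrow$(c) in Theorem~\ref{THM: TFAE}: invoking the planar extendability (ii) at a 5-vertex of the smaller side of the cut produces a strictly smaller planar $\Z_3$-reduced graph of minimum degree at least five, contradicting minimality.

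For (iv), the plan is to reduce the crossing-number-one case to (ii) via a parallel-edge trick. Let $G$ be 5-edge-connected with crossing number at most one; if $G$ is planar, (i) applies, so assume the drawing has a unique crossing between $e_1=u_1u_2$ and $e_2=u_3u_4$, and by rerouting one may take $\{u_1,u_2\}\cap\{u_3,u_4\}=\emptyset$. Replace the crossing point by a new degree-4 vertex $v$, subdividing both $e_1$ and $e_2$, giving a planar graph $G'$. A short case split on $m_1=|\{u_1,u_2\}\cap\bar X|$ and $m_2=|\{u_3,u_4\}\cap\bar X|$ shows that every edge cut $[X,\bar X]$ of $G'$ with $v\in X$ other than $\partial_{G'}(v)$ has size at least five. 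Now form $G''$ by adding to $G'$ an edge $e$ parallel to $vu_1$ in the planar embedding; then $G''$ is planar, $v$ is a 5-vertex of $G''$, the trivial 4-cut at $v$ lifts to a 5-cut, and $G''$ is 5-edge-connected.

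Given $\beta\in Z(G,\Z_3)$, define $\beta''\in Z(G'',\Z_3)$ by $\beta''(v)=1$, $\beta''(u_1)=\beta(u_1)-1$, and $\beta''(x)=\beta(x)$ for $x\notin\{v,u_1\}$. Pre-orient $\partial_{G''}(v)$ by taking $vu_1$ and $vu_3$ as in-edges at $v$ and $vu_2$, $vu_4$, $e$ as out-edges; this has $d^+(v)-d^-(v)=1$ and respects the two pairings $\{vu_1,vu_2\}$, $\{vu_3,vu_4\}$ needed to recover $e_1$ and $e_2$. By (ii), this pre-orientation extends to a $\beta''$-orientation $D''$ of $G''$; deleting $e$ then gives a $\beta'$-orientation $D'$ of $G'$ with $\beta'(v)=0$, the two pairings persist, and collapsing each pair $\{vu_i,vu_{i+1}\}$ for $i\in\{1,3\}$ into a single oriented edge recovers a $\beta$-orientation of $G$.

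The main subtlety in (iv) is that respecting the two pairings at $v$ pins the net contribution of $vu_1,\dots,vu_4$ to zero, so a fifth edge at $v$ must carry net $\pm1$, forcing $\beta''(v)=\pm 1$; this is why the parallel edge $e$ is necessary, and why $\beta$ must be shifted by $\mp 1$ at $u_1$ (the other endpoint of $e$) to keep $\beta''$ summing to zero. For (iii), the only subtlety is keeping the cut-reduction inside the planar class, which is automatic because edge contractions and the planar extendability (ii) both preserve planarity.
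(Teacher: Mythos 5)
Your proposal is correct, and for part (iv) it takes a noticeably different route from the paper's. The paper planarizes by \emph{deleting} both crossing edges $x_1x_2$ and $y_1y_2$ and adding one new vertex $z$ joined to the four endpoints by five edges (one doubled); the resulting graph is $5$-edge-connected and planar, so (ii) together with Lemma~\ref{extendingiff} immediately gives that $G-x_1x_2-y_1y_2$ is $\Z_3$-connected, whence so is $G$. This shows the slightly stronger fact that $G$ minus the two crossing edges is already $\Z_3$-connected, and it avoids all pre-orientation bookkeeping. You instead keep the crossing edges as subdivided paths through the planarizing vertex $v$, add a parallel edge to reach degree $5$, and hand-pick a pre-orientation respecting the two pairings together with a boundary shift at $u_1$; I checked the arithmetic ($d^+(v)-d^-(v)=1=\beta''(v)$, the shift $\beta''(u_1)=\beta(u_1)-1$ cancels on deleting $e$, and collapsing the pairings restores $\beta$) and it is sound. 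Your route is more laborious but self-contained in that it never invokes Lemma~\ref{extendingiff}. For part (iii) your argument is essentially the paper's (contract the complement of a small critical cut, boost the new vertex to degree $5$, apply (ii)), but your description of the payoff is slightly off: the reduction does not produce ``a strictly smaller $\Z_3$-reduced graph contradicting minimality''; rather, applying (ii) at the contracted vertex shows via Lemma~\ref{extendingiff} that the small side (which has at least two vertices since $\delta\ge 5$) induces a $\Z_3$-connected subgraph, contradicting $\Z_3$-reducedness directly --- no minimal counterexample is needed. You should also say explicitly that the cut is chosen to be a critical one and that $5-d(S)$ parallel edges are added to the contracted vertex, since these are what make the contracted graph $5$-edge-connected and hence eligible for (ii).
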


~

 For general graphs, we  summarize some previous approach on each of the above statements of Theorem \ref{THM: TFAE} from \cite{LTWZ13, HLL18}, and also provide new partial results for Theorem \ref{THM: TFAE}(d).
\begin{theorem}\label{THM: ETFH}  Each of the following holds.
\\
(a) {\em(\cite{LTWZ13})} Every $6$-edge-connected graph is $\Z_3$-connected.
\\
(b-i) {\em(\cite{LTWZ13})} Every $6$-edge-connected graph is $\Z_3$-extendable at every vertex of degree at most $7$.
\\
(b-ii) {\em(\cite{HLL18})} Every $5$-edge-connected essentially $23$-edge-connected  graph is $\Z_3$-extendable at every $5$-vertex.
\\
(c) (\cite{HLL18}) Every $\Z_3$-reduced graph has minimum degree at most $5$.\\
(d-i) Every $4$-edge-connected graph with at most five $4$-cuts and without $5$-cuts is $\Z_3$-connected.\\
(d-ii) Every $5$-edge-connected graph with at most seven $5$-cuts is $\Z_3$-connected.
\end{theorem}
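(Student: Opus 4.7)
The plan is to establish (d-i) and (d-ii) by minimum counterexample arguments that pivot on Theorem~\ref{THM: ETFH}(c). Let $G$ be a minimum counterexample to the statement under consideration. The standard contraction principle, namely that whenever $H$ is a $\Z_3$-connected subgraph of $G$ with $|V(H)|\geq 2$ the graph $G/H$ is strictly smaller, still $4$- (resp.\ $5$-) edge-connected, and has no more small cuts than $G$ (since cuts in $G/H$ correspond to cuts in $G$ that do not separate $H$), forces $G$ to be $\Z_3$-reduced. Applying (c) then yields $\delta(G)\leq 5$.

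I would then pin down the structural skeleton of $G$ by combining this degree bound with the edge-connectivity and cut-count hypotheses. In (d-i), the absence of $5$-cuts excludes $5$-vertices, so $\delta(G)=4$; since distinct $4$-vertices give distinct trivial $4$-cuts, $G$ has at most five $4$-vertices and every remaining vertex has degree at least $6$. In (d-ii), $5$-edge-connectivity with $\delta\leq 5$ forces $\delta(G)=5$, and the analogous count gives at most seven $5$-vertices with every other vertex of degree at least $6$. In either case, $G$ is essentially a $6$-edge-connected graph decorated with only a bounded number of low-degree ``defect'' vertices.

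The main reduction step is to apply Mader's complete splitting lemma (Lemma~\ref{maderlem}) at a non-cut vertex $u$ of even degree $\geq 6$, which exists unless $G$ is tiny enough to be dispatched by direct inspection. Mader's theorem produces an admissible pairing of the edges at $u$ such that, after lifting all pairs and deleting $u$, the resulting graph $G'=G-u$ preserves the pairwise edge-connectivity between every pair of remaining vertices and hence remains $4$- (respectively $5$-) edge-connected. Under the splitting, cuts transform by $|\partial_{G'}(S)|=|\partial_G(S)|-2a(S)$, where $a(S)$ counts the split-pairs with both endpoints in $S$; in particular, a small cut of $G'$ lifts to a cut of $G$ of the same parity and at least the same size. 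The critical task is to exploit the flexibility in choosing an admissible Mader pairing to avoid ``activating'' too many small cuts, so that the cut-count hypothesis is preserved in $G'$. Once this is achieved, minimality gives that $G'$ is $\Z_3$-connected, and any $\beta$-orientation of $G'$ lifts through the splitting to a $\beta$-orientation of $G$, contradicting the counterexample assumption.

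The principal obstacle is the cut-count analysis: in the worst scenario, splitting at the wrong pairing could collapse many $6$-cuts of $G$ into $4$-cuts of $G'$ (and analogously for (d-ii), $7$-cuts into $5$-cuts). Overcoming this requires exploiting the structural scarcity of small cuts in $G$---the laminar family of near-minimum edge-cuts through $u$ consists of at most a handful of sets---together with the large number of perfect matchings on the $\geq 6$ edges at $u$, to argue that some admissible pairing avoiding all bad configurations exists. A secondary difficulty arises in corner cases where no vertex of even degree $\geq 6$ is available, for which one must either verify $G$ directly or switch to contracting a non-trivial small edge-cut and gluing the two sides' orientations via the extendability tools (b-i)--(b-ii).
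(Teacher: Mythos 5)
Your proposal is \emph{genuinely different} from the paper's proof, and unfortunately it contains a gap at exactly the step you flag as ``the principal obstacle.'' The paper does not perform any Mader splitting or any minimum-counterexample reduction inside the cut structure. Instead, for (d-ii) it directly forms the graph $G'$ by attaching one new vertex $z$ of degree $7$, with one new edge into each $5$-critical-set (Observation~\ref{OB: kedgeconnected} gives at most seven such sets and guarantees $G'$ is $6$-edge-connected away from $z$), and then invokes (b-i) plus Lemma~\ref{extendingiff} to conclude $G = G'-z$ is $\Z_3$-connected. For (d-i), where $d_{G'}(z)\le 5$ is too small for the $6$-edge-connected version to apply, the paper instead uses the \emph{full} Lov\'asz--Thomassen--Wu--Zhang extension theorem (Theorem~\ref{partialextending}): it adds one edge $zv_i$ into each $4$-critical-set $A_i$, pre-orients these edges and adjusts the boundary function $\beta'$ so that $d_{G'}(A_i)=4+|\tau'(A_i)|$, and then checks the hypotheses of Theorem~\ref{partialextending} directly; the absence of $5$-cuts is precisely what makes condition (iii) hold. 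Neither argument touches Mader's lemma, minimum counterexamples, or $\Z_3$-reducedness, and part (c) plays no role in the paper's proof of (d-i)--(d-ii).

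The concrete gap in your plan is the claim that an admissible Mader pairing exists that avoids collapsing $6$-cuts into $4$-cuts (resp.\ $7$-cuts into $5$-cuts). Your formula $|\partial_{G'}(S)| = |\partial_G(S)| - 2a(S)$ is correct (with $a(S)$ the number of split pairs lying inside $S$), but the hypotheses give you no control over the number or arrangement of $6$-cuts and $7$-cuts of $G$ through $u$ --- only small cuts are bounded. These near-minimum cuts need not form a laminar family through $u$, and Mader's theorem only guarantees that local edge-connectivities are preserved, not that any particular collection of ``bad'' cut drops can be dodged. Without a lemma that bounds, or structures, the family of near-minimum cuts that interact with the pairing, the existence of a good pairing is unsupported. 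There is also a secondary issue you gloss over: $\Z_3$-reducedness tells you $\delta(G)\le 5$, but the (at most five, resp.\ seven) small cuts of $G$ need not all be trivial vertex cuts, so the picture of ``$G$ is a $6$-edge-connected graph decorated with a handful of low-degree vertices'' is not accurate; non-trivial $4$-critical-sets can occur and your subsequent case analysis doesn't account for them. I'd suggest replacing the whole splitting stage with the paper's attachment construction and Theorem~\ref{partialextending}, which bypasses all of the cut-count bookkeeping.
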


Note that Jaeger et al. \cite{JLPT92} constructed a $4$-edge-connected non-$\Z_3$-connected graph with fifteen $4$-cuts and without $5$-cuts. This  indicates that Theorem \ref{THM: ETFH}(d-i) is almost tight.

In the next section, we first present some preliminaries, and then prove Theorems \ref{kochol}, \ref{THM: TFAE} and \ref{THM:pla}. The proof of Theorem \ref{THM: ETFH} (d-i)(d-ii) will be completed in Section 3.

\section{Flow Extensions}

%\section{Proofs of Main Results}

\subsection{Preliminaries}
Before proceeding we introduce a few more notation. For a vertex subset $A\subset V(G)$, we use $\partial_G(A)$ to denote the set of edges with one end in $A$ and the other in $A^c$, where $A^c=V(G)\setminus A$ is the complement of $A$. Let $d_G(A)=|\partial_G(A)|$ be the number of edges between $A$ and $A^c$. When $A=\{x\}$, we shall use $\partial_G(x)$ for $\partial_G(\{x\})$ and $d_G(x)$ for $d_G(\{x\})$, respectively. Sometimes the subscripts may be omitted for convenience if the graph $G$ is understood from context.

In a graph $G$, a $k$-cut $\partial(A)$ is called {\em a $k$-critical-cut} with respected to $A$ if $d(A)\le k$ and for any $B\subsetneq A$, $d(B)>k$; we also say that $A$ is a {\em $k$-critical-set}. The following observation follows easily from the definition.

\begin{observation}\label{OB: kedgeconnected}
  Let $G$ be a $k$-edge-connected graph with exactly  $q$  $k$-cuts. Denote $A_1, A_2, \dots, A_t$ to be all distinct $k$-critical-set $A$ such that $\partial(A)$ is a $k$-critical-cut. Then each of the following holds.\\
  (i) $A_i\cap A_j= \emptyset$ for any $i\neq j$.
  \\
  (ii) If $q=1$, then $t=2$ and $A_2=V(G)\setminus A_1$.\\
  (iii) If $q\ge 2$, then $\partial(A_i)\neq \partial(A_j)$ for any $i\neq j$.  Hence $t\le q$.\\
  (iv)  Construct a graph $G'$ from $G$ by adding a new vertex $x$ and connecting $x$ and $A_i$ with a new edge for each $i=1, \ldots, t$. Then all the edge-cuts other than $\partial_{G'}(x)$ in $G'$ have size at least $k+1$.
\end{observation}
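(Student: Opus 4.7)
My plan throughout is to combine the submodular inequality $d(X\cap Y)+d(X\cup Y)\le d(X)+d(Y)$ with the defining property of a $k$-critical-set, namely that every nonempty proper subset has cut value strictly larger than $k$ (while the set itself has cut value exactly $k$, by $k$-edge-connectivity).

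For (i), if $A_i\cap A_j\ne\emptyset$ with $i\ne j$, I distinguish whether one of $A_i,A_j$ contains the other. If $A_i\subsetneq A_j$, then $A_i$ is a nonempty proper subset of $A_j$ with $d(A_i)=k$, directly violating criticality of $A_j$. Otherwise $A_i\cap A_j$ is a nonempty proper subset of $A_i$, and submodularity together with $k$-edge-connectivity forces $d(A_i\cap A_j)\le k$, again contradicting criticality. The only mildly annoying subcase is $A_i\cup A_j=V(G)$, for which $A_j^c$ is a nonempty proper subset of $A_i$ with $d(A_j^c)=k$, yielding the same contradiction.

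For (ii) and (iii), I would first record the standard fact that in a connected graph $\partial(S)=\partial(T)$ forces $T\in\{S,S^c\}$ (one verifies this by inspecting the four cells $S\cap T,\,S\cap T^c,\,S^c\cap T,\,S^c\cap T^c$ and observing that four of the six possible between-cell edge types must be absent, so the graph splits unless one pair of opposite cells is empty). In case $q=1$, this pins down the two sides of the unique $k$-cut as the only candidates for critical-sets, and both are easily seen to be critical since a smaller subset with cut value $\le k$ would produce a second $k$-cut. In case $q\ge 2$, if $A_i\ne A_j$ yet $\partial(A_i)=\partial(A_j)$, the same observation gives $A_j=A_i^c$; choosing a further $k$-cut $\partial(C)$ with $\partial(C)\ne\partial(A_i)$ and splitting on whether $C\subseteq A_i$, $C\subseteq A_i^c$, or $C$ meets both sides nontrivially, each case produces a nonempty proper subset of $A_i$ or $A_i^c$ with cut value $\le k$ (the mixed case uses submodularity as in (i)), violating criticality.

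For (iv), after complementation any cut of $G'$ other than $\partial_{G'}(x)$ corresponds to some $\emptyset\ne S\subsetneq V(G)$, and
\[ d_{G'}(S)=d_G(S)+|\{i:A_i\cap S\ne\emptyset\}|. \]
If $d_G(S)\ge k+1$ we are done; otherwise $d_G(S)=k$, so $S$ contains a minimal nonempty subset $S'\subseteq S$ with $d(S')\le k$, which is by definition a $k$-critical-set and hence equals some $A_i$. This $A_i\subseteq S$ contributes the needed $+1$. The most delicate step I anticipate is (iii), where I must rule out the configuration of $A_i$ and $A_i^c$ both being critical in the presence of a second $k$-cut; the other parts are essentially routine applications of submodularity and the enumeration property of $\{A_1,\ldots,A_t\}$.
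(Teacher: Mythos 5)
Your proof is correct, and since the paper asserts this observation without proof (``follows easily from the definition''), your write-up simply supplies the details the paper omits: the submodularity/case analysis for (i) and (iii), the standard fact that $\partial(S)=\partial(T)$ in a connected graph forces $T\in\{S,S^c\}$ for (ii)--(iii), and the minimal-subset argument for (iv) are exactly the right tools, and all the delicate subcases (e.g.\ $A_i\cup A_j=V(G)$ in (i), and the configuration $A_j=A_i^c$ in (iii)) are handled correctly. One small correction in (iv): the displayed identity $d_{G'}(S)=d_G(S)+|\{i:A_i\cap S\ne\emptyset\}|$ is not an equality, because the new edge joins $x$ to a single chosen vertex $v_i\in A_i$, so if $A_i$ meets both $S$ and $S^c$ that edge may fail to lie in $\partial_{G'}(S)$ even though $i$ is counted on the right-hand side. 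The correct count is $|\{i: v_i\in S\}|$, which is at least $|\{i: A_i\subseteq S\}|$; since your argument only ever uses the contribution of an $A_i$ with $A_i\subseteq S$, the conclusion is unaffected, but the formula should be stated as an inequality or phrased in terms of the endpoints $v_i$.
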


Let $G$ be a graph with a $5$-vertex $x\in V(G)$. In a mod $3$-orientation $D$ of $G$,  the  edges in $\partial(x)$ at $x$  is oriented either as $4$ ingoing and $1$ outgoing, or as $1$ ingoing and $4$ outgoing. So we call such an edge in $\partial(x)$  a {\em minor-edge} at $x$ if its orientation is different from other edges in $\partial(x)$.

A major step of our arguments relies on the following property of flows on the graph $W$ depicted in  Figure \ref{FIGW}. Formally, $W$ denotes the graph with vertex set $V(W)=\{v_0,v_1,\ldots,v_5\}$ and edge multiset  $$E(W)=\{v_5v_1,v_5v_1\}\bigcup_{i=1}^4\{v_iv_{i+1}, v_iv_{i+1}\}\bigcup_{i=1}^5\{v_0v_i\}.$$

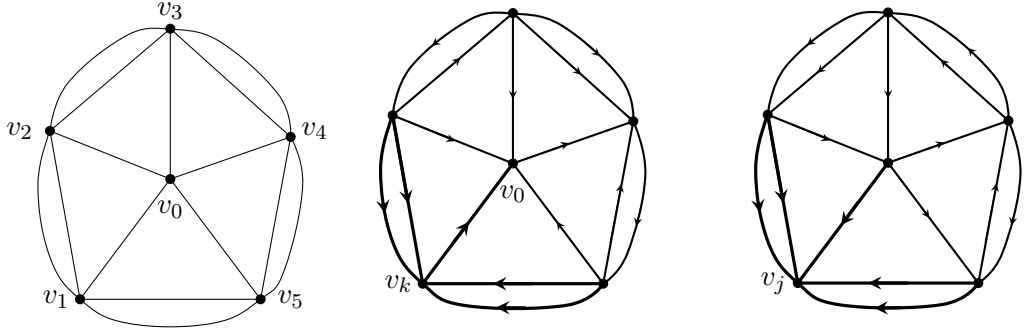
\begin{figure}
\centering
\begin{minipage}[h]{0.3\textwidth}
\begin{tikzpicture}[scale=0.8]
\tikzstyle{mynodestyle} = [draw,shape=circle,outer sep=0,inner sep=1.2,minimum size=2,fill=black]

\node [mynodestyle] (v2) at (0,-0.5) {};
\node (va0) at (0,-1) {$v_0$};
\node [mynodestyle] (v1) at (0,2) {};
\node (va3) at (0,2.3) {$v_3$};
\node [mynodestyle] (v3) at (-2,0.3) {};
\node (va2) at (-2.5,.3) {$v_2$};
\node [mynodestyle] (v6) at (2,0.2) {};
\node (va4) at (2.4,.3) {$v_4$};
\node [mynodestyle] (v4) at (-1.5,-2.5) {};
\node (va1) at (-1.9,-2.5) {$v_1$};
\node [mynodestyle] (v5) at (1.5,-2.5) {};
\node (va5) at (2,-2.5) {$v_5$};
\draw  (v1) edge (v2);
\draw  (v3) edge (v2);
\draw  (v2) edge (v4);
\draw  (v2) edge (v5);
\draw  (v2) edge (v6);

\draw  (v1) edge (v6);
\draw  (v6) edge (v5);
\draw  (v4) edge (v5);
\draw  (v3) edge (v4);
\draw  (v3) edge (v1);
\draw  plot[smooth, tension=.7] coordinates {(v3) (-1.7,1.1) (-0.7,1.9) (v1) (0,2) (0.8,1.8) (1.8,0.9) (v6)};
\draw  plot[smooth, tension=.7] coordinates {(v6) (2.2,-0.6) (1.9,-2) (v5) (0.7,-2.9) (-0.7,-2.9) (v4) (-2,-1.8) (-2.2,-0.6) (v3)};
\end{tikzpicture}
\end{minipage}
\begin{minipage}[h]{0.3\textwidth}
\begin{tikzpicture}[scale=0.8]
\tikzstyle{mynodestyle} = [draw,shape=circle,outer sep=0,inner sep=1.2,minimum size=2,fill=black]

\node [mynodestyle] (v2) at (0,-0.5) {};
\node [mynodestyle] (v1) at (0,2) {};
\node [mynodestyle] (v3) at (-2,0.3) {};
\node [mynodestyle] (v6) at (2,0.2) {};
\node [mynodestyle] (v4) at (-1.5,-2.5) {};
\node [mynodestyle] (v5) at (1.5,-2.5) {};
\node (va1) at (-1.9,-2.5) {$v_k$};
\node (va0) at (0,-1) {$v_0$};
\draw [ thick](v1) edge (v2);
\draw [ thick]  (v3) edge (v2);
\draw [very thick]  (v2) edge (v4);
\draw [ thick] (v2) edge (v5);
\draw [ thick] (v2) edge (v6);

\draw [ thick] (v1) edge (v6);
\draw[ thick]  (v6) edge (v5);
\draw[very thick]  (v4) edge (v5);
\draw [very thick] (v3) edge (v4);
\draw[ thick]  (v3) edge (v1);
\draw [ thick] plot[smooth, tension=.7] coordinates {(v3) (-1.7,1.1) (-0.7,1.9) (v1) (0,2) (0.8,1.8) (1.8,0.9) (v6)};
\draw[very thick]  plot[smooth, tension=.7] coordinates {(v5) (0.8925,-2.8587) (-0.791,-2.8637) (v4) (-2,-1.8) (-2.2,-0.6) (v3)};

\node (v7) at (-1.7108,-1.3392) {};
\draw [very thick,-stealth] (v3) edge (v7);
\node (v8) at (-2.1822,-0.9903) {};
\node (v9) at (-2.1,-1.5) {};
\draw [very thick,-stealth] (v8) edge (v9);
\node (v10) at (-1.1321,-2.0033) {};
\node (v11) at (-0.6,-1.3) {};
\draw [very thick,-stealth] (v10) edge (v11);
\node (v12) at (0.5,-2.5) {};
\node (v13) at (-0.5,-2.5) {};
\draw [very thick,-stealth] (v12) edge (v13);
\node (v14) at (0.39,-2.9048) {};
\node (v15) at (-0.4494,-2.8946) {};
\draw [very thick,-stealth] (v14) edge (v15);
\draw[ thick]  plot[smooth, tension=.7] coordinates {(v6) (2.2,-0.7) (1.9,-2) (v5)};

\node (v17) at (0,0.4) {};
\node (v19) at (-0.8,-0.2) {};
\node (v16) at (0,2) {};
\node (v18) at (-2,0.3) {};
\node at (-1.5,-2.5) {};
\node (v20) at (1.5,-2.5) {};
\node at (2,0.2) {};
\draw [-stealth] (v16) edge (v17);
\draw [-stealth] (v18) edge (v19);
\node (v21) at (0.6,-1.3) {};
\draw [-stealth] (v20) edge (v21);
\node (v22) at (1.1,-0.1) {};
\draw [-stealth] (v2) edge (v22);
\node (v23) at (1.5806,-1.9931) {};
\node (v24) at (1.8329,-0.672) {};
\node (v25) at (2.2,-0.9) {};
\node (v26) at (2.0085,-1.7127) {};
\draw [-stealth] (v23) edge (v24);
\draw [-stealth] (v25) edge (v26);
\node (v27) at (0.5203,1.5289) {};
\node (v28) at (1.3056,0.8211) {};
\draw [-stealth] (v27) edge (v28);
\node (v29) at (1.0433,1.6319) {};
\node (v30) at (1.6186,1.1384) {};
\draw [-stealth] (v29) edge (v30);
\node (v31) at (-1.4,0.8) {};
\node (v32) at (-0.7,1.4) {};
\draw [-stealth] (v31) edge (v32);
\node (v33) at (-0.932,1.7786) {};
\node (v34) at (-1.5171,1.3231) {};
\draw [-stealth] (v33) edge (v34);
\end{tikzpicture}
\end{minipage}
\begin{minipage}[h]{0.3\textwidth}
\begin{tikzpicture}[scale=0.8]
\tikzstyle{mynodestyle} = [draw,shape=circle,outer sep=0,inner sep=1.2,minimum size=2,fill=black]

\node (va1) at (-1.9,-2.5) {$v_j$};
\node [mynodestyle] (v2) at (0,-0.5) {};
\node [mynodestyle] (v1) at (0,2) {};
\node [mynodestyle] (v3) at (-2,0.3) {};
\node [mynodestyle] (v6) at (2,0.2) {};
\node [mynodestyle] (v4) at (-1.5,-2.5) {};
\node [mynodestyle] (v5) at (1.5,-2.5) {};
\draw [ thick](v1) edge (v2);
\draw [ thick]  (v3) edge (v2);
\draw [very thick]  (v2) edge (v4);
\draw [ thick] (v2) edge (v5);
\draw [ thick] (v2) edge (v6);

\draw [ thick] (v1) edge (v6);
\draw[ thick]  (v6) edge (v5);
\draw[very thick]  (v4) edge (v5);
\draw [very thick] (v3) edge (v4);
\draw[ thick]  (v3) edge (v1);
\draw [ thick] plot[smooth, tension=.7] coordinates {(v3) (-1.7,1.1) (-0.7,1.9) (v1) (0,2) (0.8,1.8) (1.8,0.9) (v6)};
\draw[very thick]  plot[smooth, tension=.7] coordinates {(v5) (0.8925,-2.8587) (-0.791,-2.8637) (v4) (-2,-1.8) (-2.2,-0.6) (v3)};

\node (v7) at (-1.7108,-1.3392) {};
\draw [very thick,-stealth] (v3) edge (v7);
\node (v8) at (-2.1822,-0.9903) {};
\node (v9) at (-2.1,-1.5) {};
\draw [very thick,-stealth] (v8) edge (v9);

\node (v12) at (0.5,-2.5) {};
\node (v13) at (-0.5,-2.5) {};
\draw [very thick,-stealth] (v12) edge (v13);
\node (v14) at (0.39,-2.9048) {};
\node (v15) at (-0.3947,-2.9207) {};
\draw [very thick,-stealth] (v14) edge (v15);
\draw[ thick]  plot[smooth, tension=.7] coordinates {(v6) (2.2,-0.7) (1.9,-2) (v5)};

\node (v17) at (0,0.4) {};
\node (v19) at (-0.8,-0.2) {};
\node (v16) at (0,2) {};
\node (v18) at (-2,0.3) {};
\node at (-1.5,-2.5) {};

\node at (2,0.2) {};
\draw [-stealth] (v16) edge (v17);
\draw [-stealth] (v18) edge (v19);

\node (v22) at (1.1,-0.1) {};
\draw [-stealth] (v2) edge (v22);
\node (v23) at (1.5806,-1.9931) {};
\node (v24) at (1.8329,-0.672) {};
\node (v25) at (2.2,-0.9) {};
\node (v26) at (2.0085,-1.7127) {};
\draw [-stealth] (v23) edge (v24);
\draw [-stealth] (v25) edge (v26);

\node (v33) at (-0.932,1.7786) {};
\node (v34) at (-1.5171,1.3231) {};
\draw [-stealth] (v33) edge (v34);
\node (v10) at (-0.3713,-0.9862) {};
\node (v11) at (-0.9082,-1.7091) {};
\draw [very thick,-stealth] (v10) edge (v11);
\node (v31) at (-0.6292,1.4658) {};
\node (v32) at (-1.3479,0.8486) {};
\draw [-stealth] (v31) edge (v32);
\node (v27) at (1.269,0.8571) {};
\node (v28) at (0.7025,1.3686) {};
\draw [-stealth] (v27) edge (v28);
\node (v29) at (1.6495,1.0896) {};
\node (v30) at (1.1548,1.5758) {};
\draw [-stealth] (v29) edge (v30);
\node (v20) at (0.8166,-1.5866) {};
\draw [-stealth] (v2) edge (v20);
\end{tikzpicture}
\end{minipage}
\caption{The graph $W$ and its orientations.}\label{FIGW}
\end{figure}

\begin{lemma}\label{OB: W}
(i) For any mod $3$-orientation $D$ of $W$, there exists a vertex $v_k$ with $1\le k\le 5$ such that $v_0v_k$ is the minor-edge at $v_k$.

   (ii) Let $\beta\in Z(W,\Z_3)$ be a boundary function such that $\beta(v_i)=1$ in $\mathbb{Z}_3$ for each $i=0,1,\ldots,5$. Then for any $\beta$-orientation $D$ of $W$, there exists a vertex $v_{j}\in V(W)$ such that  $d_D^+(v_{j})=0$ and $d_D^-(v_{j})=5$.
\end{lemma}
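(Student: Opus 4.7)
The approach is to parametrize orientations of $W$ by cyclic parameters on the outer doubled cycle together with sign parameters on the spokes, and then reduce both parts to a telescoping identity on a $5$-cycle. For $1\le k\le 5$ (indices taken cyclically mod $5$) let $a_k\in\{0,1,2\}$ count how many of the two parallel edges between $v_k$ and $v_{k+1}$ are oriented from $v_k$ to $v_{k+1}$, and let $\epsilon_k\in\{+1,-1\}$ indicate whether the spoke $v_0v_k$ leaves $v_k$ ($+1$) or enters $v_k$ ($-1$). A direct count yields
\[
d_D^+(v_k)-d_D^-(v_k)=2(a_k-a_{k-1})+\epsilon_k \ \text{for}\ 1\le k\le 5, \qquad d_D^+(v_0)-d_D^-(v_0)=-\sum_{k=1}^{5}\epsilon_k.
\]
Thus a mod-$3$ orientation corresponds exactly to the congruence system $a_k-a_{k-1}\equiv\epsilon_k\pmod 3$ together with $\sum\epsilon_k\equiv 0\pmod 3$, while a $\beta$-orientation with $\beta(v_i)=1$ for all $i$ corresponds to $a_k-a_{k-1}\equiv \epsilon_k-1\pmod 3$ together with $\sum\epsilon_k\equiv -1\pmod 3$.

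For (i), $\sum\epsilon_k\equiv 0\pmod 3$ with $\epsilon_k\in\{\pm 1\}$ forces $\sum\epsilon_k\in\{\pm 3\}$. The congruence on $a_k-a_{k-1}\in\{-2,\dots,2\}$ has exactly two solutions per $k$: a ``regular'' one $a_k-a_{k-1}=\epsilon_k$ and an ``extreme'' one $a_k-a_{k-1}=-2\epsilon_k$, and one checks that the extreme case is precisely the case where all four outer edges at $v_k$ point oppositely to the spoke $v_0v_k$, i.e.\ $v_0v_k$ is the minor-edge at $v_k$. If no spoke were a minor-edge, we would have $a_k-a_{k-1}=\epsilon_k$ for every $k$, and telescoping around the $5$-cycle would give $0=\sum (a_k-a_{k-1})=\sum\epsilon_k=\pm 3$, a contradiction.

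For (ii), $\sum\epsilon_k\equiv -1\pmod 3$ restricts $\sum\epsilon_k\in\{-1,5\}$. If $\sum\epsilon_k=5$ then every spoke is oriented toward $v_0$, so $v_0$ itself has $d_D^+(v_0)=0$ and $d_D^-(v_0)=5$ and we can take $v_j=v_0$. Otherwise $\sum\epsilon_k=-1$, meaning three $\epsilon_k=-1$ and two $\epsilon_k=+1$; the congruence forces $a_k-a_{k-1}=0$ at each ``$+1$'' index and $a_k-a_{k-1}\in\{1,-2\}$ at each ``$-1$'' index. Writing $p$ for the number of the latter equal to $1$ and $q$ for the number equal to $-2$, the cyclic identity $\sum(a_k-a_{k-1})=0$ gives $p-2q=0$ while $p+q=3$, so $q=1$. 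The unique index $j$ with $\epsilon_j=-1$ and $(a_{j-1},a_j)=(2,0)$ then satisfies $d_D^+(v_j)=a_j+(2-a_{j-1})+0=0$, hence $d_D^-(v_j)=5$, as required.

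The only real hazard is keeping the sign conventions straight---which endpoint each spoke sign is measured at, and the direction in which the cyclic indexing runs---since reversing either convention flips signs in the telescope. Once those are pinned down, both parts collapse to the same short telescoping identity on the $5$-cycle, distinguished only by which residues mod $3$ appear in the congruence system, so no genuinely hard step remains.
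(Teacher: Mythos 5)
Your proof is correct, but its route is genuinely different from the paper's. The paper argues purely by vertex-level deficiency counting: for a mod-$3$ orientation the deficiency $d_D^+(v)-d_D^-(v)$ is $\pm 3$ at every $5$-vertex (resp.\ lies in $\{1,-5\}$ when $\beta\equiv 1$), these deficiencies must sum to zero, and assuming no spoke is a minor-edge in part (i) forces a pattern of four $+3$'s and two $-3$'s summing to $\pm 6$, an immediate contradiction; part (ii) is even shorter, since not all six deficiencies can equal $1$. You instead decompose an orientation of $W$ into outer-cycle multiplicities $(a_k)$ and spoke signs $(\epsilon_k)$, translate the boundary conditions into the cyclic congruence system $a_k-a_{k-1}\equiv\epsilon_k$ (resp.\ $\equiv\epsilon_k-1$) together with a constraint on $\sum\epsilon_k$, identify the minor-edge condition with the ``extreme'' solution $a_k-a_{k-1}=-2\epsilon_k$, and close via the telescoping identity $\sum_k(a_k-a_{k-1})=0$. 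Both ultimately rest on the same conservation law ($\sum_v(d_D^+(v)-d_D^-(v))=0$, of which your telescope is the outer-cycle shadow), but yours is an explicit parametrization that in effect classifies all admissible orientations of $W$, at the price of an extra case split on $\sum\epsilon_k\in\{-1,5\}$ in part (ii), whereas the paper's version reads the contradiction directly off the deficiencies without any parametrization.
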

\begin{proof}
  (i) Suppose to the contrary that, in a mod $3$-orientation $D$ of $G$ each edge $v_0v_k$ is not the minor-edge at $v_k$ for $k=1,2,\ldots,5$. We count the {\em deficiency} $d_D^+(v)-d_D^-(v)$ at each vertex $v\in V(W)$.  By symmetry, we may assume that under orientation $D$ the edges in $\partial(v_0)$ at vertex $v_0$ are oriented as $4$ ingoing and $1$ outgoing (with deficiency $-3$). As each  $v_0v_k$ is not the minor-edge at $v_k$ for $k=1,2,\ldots,5$, it holds that  four of $\{v_1,v_2,\ldots,v_5\}$ are received orientations as $1$ ingoing and $4$ outgoing (with deficiency $3$), and the other one is opposite as $4$ ingoing and $1$ outgoing (with deficiency $-3$). So the deficiency at all the vertices are four $3$'s and two $-3$'s. This is a contradiction to the fact that $\sum_{v\in V(W)}(d_D^+(v)-d_D^-(v))=0$.

  (ii) The proof is similar to (i) by counting deficiency at each vertex. Let $D$ be a $\beta$-orientation of $W$. Then for each vertex $v\in V(W)$, $d_D^+(v)-d_D^-(v)\equiv \beta(v)\equiv 1\pmod3$, and so the deficiency $d_D^+(v)-d_D^-(v)\in\{1,-5\}$. Since $\sum_{i=0}^5 (d_D^+(v_i)-d_D^-(v_i))=0$, there exists a vertex $v_{j}$ with $0\le j\le 5$ such that $d_D^+(v)-d_D^-(v)=-5$ as desired.
  \end{proof}

We also need the following lemma about $\Z_3$-extendability in \cite{HLL18}.
\begin{lemma}\label{extendingiff}{\em(\cite{HLL18})} Let $G$ be a graph with $x \in V(G)$.
 Then $G$ is $\Z_3$-extendable at $x$ if and only if $G-x$ is $\Z_3$-connected.
\end{lemma}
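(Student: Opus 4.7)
The plan is to prove both directions by treating any pre-orientation $D_x$ of $\partial_G(x)$ as a dictionary that translates boundary functions on $G$ into boundary functions on $G-x$ and back. The key bookkeeping device is, for each $v\in V(G-x)$, the integer
$$c(v) \;=\; \bigl|\{e=xv\in\partial_G(x): D_x \text{ orients } e \text{ as } v\to x\}\bigr| \;-\; \bigl|\{e=xv\in\partial_G(x): D_x \text{ orients } e \text{ as } x\to v\}\bigr|,$$
which records the net contribution of $D_x$ to $d^+(v)-d^-(v)$ in any orientation of $G$ that extends $D_x$. Summing over $v\in V(G-x)$ gives $\sum_{v\ne x}c(v) = -(d_{D_x}^+(x)-d_{D_x}^-(x))$, since each edge of $\partial_G(x)$ contributes with opposite signs at its two endpoints.

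For the direction ``$G-x$ is $\Z_3$-connected $\Rightarrow$ $G$ is $\Z_3$-extendable at $x$'', I would take an arbitrary $\beta\in Z(G,\Z_3)$ and a pre-orientation $D_x$ with $d_{D_x}^+(x)-d_{D_x}^-(x)\equiv\beta(x)\pmod 3$, and define $\beta'(v):=\beta(v)-c(v)$ in $\Z_3$ for each $v\in V(G-x)$. The identity above together with the compatibility condition forces $\sum_{v\in V(G-x)}\beta'(v)\equiv 0\pmod 3$, so $\beta'\in Z(G-x,\Z_3)$. Applying $\Z_3$-connectivity to $G-x$ produces a $\beta'$-orientation which, combined with $D_x$, yields the desired $\beta$-orientation of $G$.

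For the converse, given $\beta'\in Z(G-x,\Z_3)$, I would pick \emph{any} pre-orientation $D_x$ of $\partial_G(x)$, then set $\beta(x):= d_{D_x}^+(x)-d_{D_x}^-(x)\pmod 3$ and $\beta(v):=\beta'(v)+c(v)$ for $v\ne x$. The same summation identity forces $\beta\in Z(G,\Z_3)$, and the compatibility of $D_x$ with $\beta$ at $x$ holds by construction. The $\Z_3$-extendability hypothesis then furnishes a $\beta$-orientation of $G$ extending $D_x$, and its restriction to $G-x$ is a $\beta'$-orientation.

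No step poses a real obstacle: the whole argument reduces to observing that $D_x$ induces a sign-consistent correspondence between boundary data on $G$ and on $G-x$. The only care required is tracking the $\pm 1$ contributions on the edges of $\partial_G(x)$ so that the ``boundary sum equals zero'' condition transfers in both directions.
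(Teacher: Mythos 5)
Your proof is correct and complete: the bookkeeping via $c(v)$ correctly transfers boundary functions between $G$ and $G-x$ in both directions, and the sign identity $\sum_{v\neq x}c(v)=-(d_{D_x}^+(x)-d_{D_x}^-(x))$ is exactly what is needed to verify that the translated functions are again boundary functions. The paper itself states this lemma as a citation to \cite{HLL18} without reproducing a proof, and your argument is the standard one used there.
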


For a graph $G$ with $uz, vz\in E(G)$, a {\em splitting} at $z$ is an operation to delete edges $uz,vz$ and add a new edge $uv$. If $z$ is an even vertex of $G$, a {\em complete splitting} at $z$ is to apply splitting operations on all the edges of $\partial_G(z)$ in pairs and then delete the isolated vertex $z$ to obtain the resulting graph. The following Mader's splitting lemma shows that it is possible to preserve the edge connectivity after splitting operations.
\begin{lemma}\label{maderlem}{\em(Mader \cite{Made78})}
  Let $G$ be a $k$-edge-connected graph with a $t$-vertex $z\in V(G)$. If $t\ge k+2$, then there exists a splitting at $z$ such that the resulting graph is $k$-edge-connected. If $t$ is even, then there exists a complete splitting at $z$ such that the resulting graph is $k$-edge-connected.
\end{lemma}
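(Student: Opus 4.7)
The plan is to prove the single-splitting half first and derive the complete-splitting half by iteration. The starting reformulation is that a pair $\{uz, vz\}$ of edges at $z$ is \emph{admissible} (its splitting preserves $k$-edge-connectivity) if and only if there is no \emph{dangerous set}, meaning a set $A \subsetneq V(G)$ with $z\notin A$, $\{u, v\}\subseteq A$, and $d_G(A)\leq k+1$. This equivalence follows directly by examining how every cut $\partial_G(S)$ transforms under the splitting: the cut value drops by exactly $2$ precisely when $\{u, v\}\subseteq S\not\ni z$, and is unchanged in every other case.

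Assuming $t\geq k+2$ and that no admissible pair exists, every pair of edges at $z$ lies inside some dangerous set. I would analyze a maximal dangerous set $X$ using the submodularity of $d_G$, which is valid on any two sets avoiding $z$. Writing $a = e_G(z,X)$ and $m = e_G(X,\,V\setminus X\setminus\{z\})$, the bounds $a+m = d_G(X)\leq k+1$ and $(t-a)+m = d_G(V\setminus X\setminus\{z\})\geq k$ combine to give the key inequality $a\leq (t+1)/2$. An uncrossing argument on two maximal dangerous sets $X, X'$ with nontrivial intersection either contradicts maximality (when $d_G(X\cup X')\leq k+1$) or forces the borderline submodular case $d_G(X\cap X') = k$ and $d_G(X\cup X') = k+2$; this borderline case is then broken by producing an admissible pair across $X\setminus X'$ and $X'\setminus X$ using the tight $k$-cut $\partial_G(X\cap X')$. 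Consequently the maximal dangerous sets give a disjoint cover of the neighbors of $z$ by blocks of sizes $a_1, a_2,\dots$ with $\sum_i a_i = t$ and each $a_i\leq (t+1)/2$. Since $t\geq k+2\geq 3$ forces at least two nontrivial blocks, the strict convexity inequality $\sum_i\binom{a_i}{2} < \binom{t}{2}$ shows that dangerous sets cannot cover all $\binom{t}{2}$ pairs of edges at $z$, a contradiction, so an admissible pair must exist.

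For the complete-splitting claim, I would iterate. Starting from an even $t$, apply the single-splitting result, each step lowering $d(z)$ by $2$. Once $d(z)<k+2$, I would continue with the stronger local form of Mader's theorem, which preserves $\lambda_G(x,y)$ for all $x,y\neq z$ as long as $d(z)\neq 3$ and no edge at $z$ is a bridge (both automatic here). After $t/2$ splittings $z$ becomes isolated and is deleted; since $\lambda(x,y)\geq k$ is preserved at every step, the resulting graph is $k$-edge-connected. The main obstacle is the structural analysis of the family of maximal dangerous sets -- in particular, handling the borderline submodular configuration $d(X\cap X')=k$, $d(X\cup X')=k+2$ and confirming that the resulting block decomposition is tight enough at the threshold $t=k+2$ for the convexity bound to just barely fire.
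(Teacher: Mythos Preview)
The paper does not prove this lemma; it is quoted with attribution to Mader~\cite{Made78} and used as a black box, so there is no in-paper argument to compare against.

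Your sketch follows the standard dangerous-set/submodularity route to Mader's theorem, and the first half is structurally on the right track. Two points warrant caution. First, the borderline configuration $d_G(X\cap X')=k$, $d_G(X\cup X')=k+2$ is precisely where the real work in Mader's proof sits; you have asserted that it ``is then broken by producing an admissible pair'' but have not said how, and this step is not routine --- it is the heart of the argument and typically requires a further counting or a third dangerous set. Second, your derivation of the complete-splitting statement is circular as written: once the iteration reaches $d(z)<k+2$ you invoke ``the stronger local form of Mader's theorem,'' but that local-connectivity version is exactly Mader's full theorem, of which the lemma you are proving is already a corollary. The clean fix is to prove the local version (preservation of $\lambda(x,y)$ for all $x,y\neq z$ whenever $z$ is not a cut-vertex and $d(z)\neq 3$) from the outset, which is what Mader actually does; both global statements in the lemma then follow at once.
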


\subsection{Proofs of Theorems \ref{kochol}, \ref{THM: TFAE} and \ref{THM:pla}}

In this subsection, we present the proofs of Theorems \ref{kochol}, \ref{THM: TFAE} and \ref{THM:pla} using a unified construction method through properties given in Lemma \ref{OB: W}.

~

\begin{figure}[ht]
\begin{minipage}[t]{0.4\textwidth}
  \begin{tikzpicture}[scale=0.5]

\tikzstyle{mynodestyle} = [draw,shape=circle,outer sep=0,inner sep=0.7,minimum size=2,fill=black]

\draw  (0,1) node (v4) {} ellipse (1.5 and 1.5);
\draw  (0,7) node (v5) {} ellipse (1.5 and 1.5);
\draw  (-4.5,3.5) node (v3) {} ellipse (1.5 and 1.5);
\draw  (4.5,3.5) ellipse (1.5 and 1.5);
\draw  (-3.5,-3) ellipse (1.5 and 1.5);
\draw  (3.5,-3) ellipse (1.5 and 1.5);

\node (g1) at (-4.2,-3.3) {\small$G^1$};
\node (g2) at (-5,3.8) {\small$G^2$};
\node (g3) at (0,7.5) {\small$G^3$};
\node (g4) at (5,3.8) {\small$G^4$};
\node (g5) at (4,-3) {\small$G^5$};
\node (g0) at (0.2,1.1) {\small$G^0$};

\node (x01) at (-4,-2.38) {\tiny$x_0^1$};
\node (x02) at (-3.9,4.2) {\tiny$x_0^2$};
\node (x03) at (1.1,6.9) {\tiny$x_0^3$};
\node (x04) at (5.2,2.8) {\tiny$x_0^4$};
\node (g5) at (2.9,-3.6) {\tiny$x_0^5$};

\node [mynodestyle] (v1) at (-4,-2) {};
\node [mynodestyle] (v19) at (-3.5,-2) {};
\node [mynodestyle] (v20) at (-3,-2.5) {};
\node [mynodestyle] (v22) at (-2.5,-3) {};
\node [mynodestyle] (v24) at (-2.5,-3.5) {};
\node [mynodestyle] (v6) at (2.5,-3.5) {};
\node [mynodestyle] (v25) at (2.5,-3) {};
\node [mynodestyle] (v23) at (3,-2.5) {};
\node [mynodestyle] (v26) at (3.5,-2) {};
\node [mynodestyle] (v28) at (4,-2) {};
\node [mynodestyle] (v8) at (5,2.5) {};
\node [mynodestyle] (v29) at (4.5,2.5) {};
\node [mynodestyle] (v27) at (4,3) {};
\node [mynodestyle] (v33) at (3.5,3.5) {};
\node [mynodestyle] (v31) at (3.5,4) {};

\node [mynodestyle] (v15) at (-1,6.5) {};

\node [mynodestyle] (v32) at (0,6) {};
\node [mynodestyle] (v30) at (0.5,6) {};
\node [mynodestyle] (v10) at (1,6.5) {};

\node [mynodestyle] (v18) at (-4.5,2.5) {};

\node [mynodestyle] (v14) at (-3.5,3.5) {};

\node [mynodestyle] (v12) at (-3.5,4) {};
\node [mynodestyle] (v2) at (-1,0.5) {};
\node [mynodestyle] (v7) at (0,0) {};
\node [mynodestyle] (v9) at (1,1) {};
\node [mynodestyle] (v11) at (0.5,2) {};
\node [mynodestyle] (v13) at (-0.5,1.5) {};
\draw  (v1) edge (v2);
\draw  (v6) edge (v7);
\draw  (v8) edge (v9);
\draw  (v10) edge (v11);
\draw  (v12) edge (v13);
\draw  (v14) edge (v15);
\node [mynodestyle] (v17) at (-3.5,3) {};
\node [mynodestyle] (v16) at (-0.5,6.5) {};
\draw  (v16) edge (v17);
\node [mynodestyle] (v21) at (-4,2.5) {};
\draw  (v18) edge (v19);
\draw  (v20) edge (v21);
\draw  (v22) edge (v23);
\draw  (v24) edge (v25);
\draw  (v26) edge (v27);
\draw  (v28) edge (v29);
\draw  (v30) edge (v31);
\draw  (v32) edge (v33);
\end{tikzpicture}

\end{minipage}
\begin{minipage}[t]{0.05\textwidth}
  ~
\end{minipage}
\begin{minipage}[t]{0.45\textwidth}
\begin{tikzpicture}[scale=0.5]

\tikzstyle{mynodestyle} = [draw,shape=circle,outer sep=0,inner sep=0.8,minimum size=2,fill=black]
\tikzstyle{myedgestyle} = [-triangle 60]

\draw  (0,1) node (v4) {} ellipse (1.5 and 1.5);
\draw  (0,7) node (v5) {} ellipse (1.5 and 1.5);
\draw  (-4.5,3.5) node (v3) {} ellipse (1.5 and 1.5);
\draw  (4.5,3.5) ellipse (1.5 and 1.5);
\draw  (-3.5,-3) ellipse (1.5 and 1.5);
\draw  (3.5,-3) ellipse (1.5 and 1.5);

\node [mynodestyle] (v1) at (-4,-2) {};
\node [mynodestyle] (v19) at (-3.5,-2) {};
\node [mynodestyle] (v20) at (-3,-2.5) {};
\node [mynodestyle] (v22) at (-2.5,-3) {};
\node [mynodestyle] (v24) at (-2.5,-3.5) {};
\node [mynodestyle] (v6) at (2.5,-3.5) {};
\node [mynodestyle] (v25) at (2.5,-3) {};
\node [mynodestyle] (v23) at (3,-2.5) {};
\node [mynodestyle] (v26) at (3.5,-2) {};
\node [mynodestyle] (v28) at (4,-2) {};
\node [mynodestyle] (v8) at (5,2.5) {};
\node [mynodestyle] (v29) at (4.5,2.5) {};
\node [mynodestyle] (v27) at (4,3) {};
\node [mynodestyle] (v33) at (3.5,3.5) {};
\node [mynodestyle] (v31) at (3.5,4) {};

\node [mynodestyle] (v15) at (-1,6.5) {};

\node [mynodestyle] (v32) at (0,6) {};
\node [mynodestyle] (v30) at (0.5,6) {};
\node [mynodestyle] (v10) at (1,6.5) {};

\node [mynodestyle] (v18) at (-4.5,2.5) {};

\node [mynodestyle] (v14) at (-3.5,3.5) {};

\node [mynodestyle] (v12) at (-3.5,4) {};
\node [mynodestyle] (v2) at (-1,0.5) {};
\node [mynodestyle] (v7) at (0,0) {};
\node [mynodestyle] (v9) at (1,1) {};
\node [mynodestyle] (v11) at (0.5,2) {};
\node [mynodestyle] (v13) at (-0.5,1.5) {};
\draw  (v1) edge (v2);
\draw  (v6) edge (v7);
\draw  (v8) edge (v9);
\draw  (v10) edge (v11);
\draw  (v12) edge (v13);
\draw  (v14) edge (v15);
\node [mynodestyle] (v17) at (-3.5,3) {};
\node [mynodestyle] (v16) at (-0.5,6.5) {};
\draw  (v16) edge (v17);
\node [mynodestyle] (v21) at (-4,2.5) {};
\draw  (v18) edge (v19);
\draw  (v20) edge (v21);
\draw  (v22) edge (v23);
\draw  (v24) edge (v25);
\draw  (v26) edge (v27);
\draw  (v28) edge (v29);
\draw  (v30) edge (v31);
\draw  (v32) edge (v33);

\draw [very thick] (v18) edge (v19);
\draw [very thick,-stealth] (v18) edge (-4,.25);

\draw [very thick] (v21) edge (v20);
\draw [very thick,-stealth] (v21) edge (-3.5,0);

\draw [very thick] (v1) edge (v2);
\draw [very thick,-stealth] (v1) edge (-2.5,-.75);

\draw [very thick] (v23) edge (v22);
\draw [very thick,-stealth] (v23) edge (0.25,-2.75);

\draw [very thick] (v25) edge (v24);
\draw [very thick,-stealth] (v25) edge (0,-3.25);
\end{tikzpicture}

\end{minipage}
\caption{The constructed graph $H$ and its orientation for proving Theorem \ref{kochol}.}\label{FIGW3flow}
\end{figure}
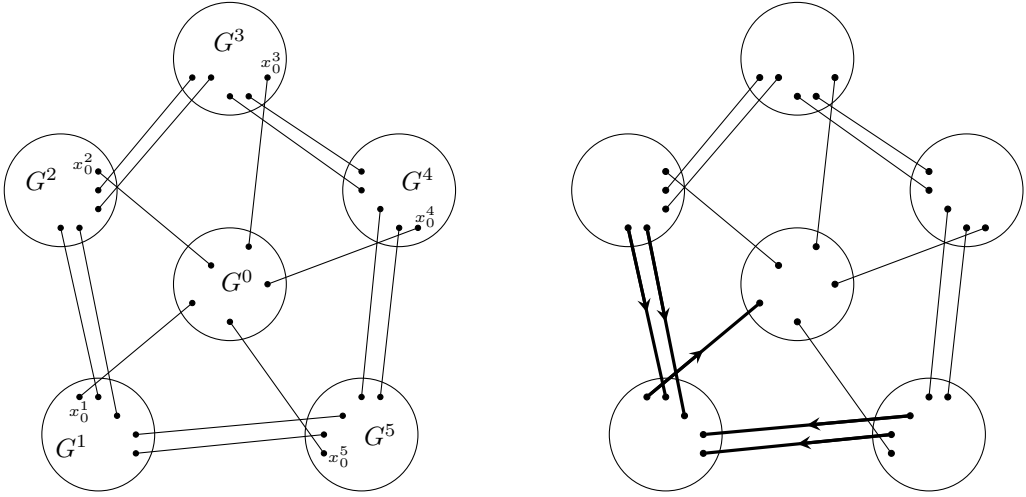

{\bf An Alternative Proof of Theorem \ref{kochol}:} Clearly, ``(i)$\Rightarrow$(ii)'' holds and a standard argument could show that ``(iii)$\Rightarrow$(i)''. We provide a proof of ``(iii)$\Rightarrow$(i)'' here for completeness, which is similar to Kochol's proof in \cite{Koch01}. Specifically, let $G$ be a counterexample of 3FC (statement (i)) with $|E(G)|+|V(G)|$ minimized. Then $G$ is $5$-regular by Lemma \ref{maderlem}. And $G$ must contain nontrivial $4$-cuts; otherwise $G$ is $5$-edge-connected, and so (i) follows by (iii). Among all nontrivial $4$-cuts of $G$, we select a $4$-cut $\partial(A)$ with $|A|$ as small as possible. Then $|V(G)|-1>|A|\ge 2$ and we have
\begin{equation}\label{EQ:A'ge5}
  \text{$d_G(A')=|\partial_G(A')|\ge 5$ for any $A'\subsetneq A$.}
\end{equation}  Contract $A$ to obtain a new graph $G_1=G/A$. Thus $G_1$ is $4$-edge-connected, and so admits a mod $3$-orientation $D_1$ by the minimality of $G$. Then we contract $A^c$ to obtain another new graph $G_2=G/A^c$, where $A^c$ is contracted to become a new vertex $x$. Pre-orient the edges in $\partial_{G_2}(x)$ the same as $\partial_{D_1}(A^c)$. Hence the edges in  $\partial_{G_2}(x)$ are oriented as two ingoing and two outgoing. Obtain a new graph $G_3$ from $G_2$ by replacing an ingoing edge at $x$ with two outgoing edges. Hence $x$ is a $5$-vertex now, and $G_3$ is $5$-edge-connected by (\ref{EQ:A'ge5}). Moreover, the pre-orientation at $x$ is still balanced mod $3$. By (iii), this pre-orientation can be extended to a mod $3$-orientation $D_3$ of $G_3$. Then, after deleting the edges of $\partial_{G_3}(x)$, the combination of $D_1$ and the rest of $D_3$ gives a mod $3$-orientation of $G$. Hence (iii) implies (i).

~

The major task remaining is to show that ``(ii)$\Rightarrow$(iii)''. The method below is principally different from Kochol's proof in  \cite{Koch01}. We hope this new method may shed some light on attacking 3FC and 3GCC.

Assume that statement (ii) holds that every $5$-edge-connected graph has a mod $3$-orientation. Suppose to the contrary that there is a $5$-edge-connected graph $G$ and a $5$-vertex $x\in V(G)$ with pre-orientation $D_x$ that is not $\M_3$-extendable to a mod $3$-orientation of $G$. Recall that $W$ denotes the graph depicted in Figure \ref{FIGW}. We construct a new graph $H$ by replacing each vertex of $W$ with a copy of $G-x$, where  each edge $v_0v_k$ ($1\le k\le 5$)  is corresponded to the minor-edge at $x$ of $D_x$ in that copy. More precisely, denote $\partial_G(x)=\{xx_0,xx_1,\ldots,xx_4\}$,  where $xx_0$ is the minor-edge in pre-orientation $D_x$. (Notice that we allow $x_i=x_j$ for $i\neq j$, when $\partial_G(x)$ contains parallel edges.) The construction of the new graph $H$ is as follows. Attach six copies of $G-x$, say $G^0,G^1,\ldots, G^5$, whose vertices corresponding to $x_0,\ldots,x_5$ are $x_0^i,\ldots,x_5^i$ for $i=0,\ldots,5$. First, replace the vertex $v_0$ of $W$ with $G^0$ by putting the end $v_0$ of edge $v_0v_i$ in the position of $x_{i-1}^0$ for each $i=0,\ldots, 4$. Then, for each $j=1,\ldots,5$, replace the vertex $v_j$ of $W$ with $G^j$ by putting the end $v_j$ of edge $v_jv_0$ in the position of $x_0^j$, and putting the end $v_j$ of other edges in $\partial_W(v_j)$ matching to $x_1^j,x_2^j,x_3^j,x_4^j$, respectively.  The constructed new graph $H$ is depicted in Figure \ref{FIGW3flow}.

 It is routine to check that $H$ is $5$-edge-connected by the $5$-edge-connectivity of $W$ and copies of $G$.  Since statement (ii) holds, $H$ admits a mod $3$-orientation $D$. Contract all copies of $G-x$ to obtain a graph $W$ and consider the orientation $D$ restricted to $W$. By Lemma \ref{OB: W} (i), there exist a vertex $v_k$ of $W$, corresponding to the contraction of $G^k$ (for some $k\in\{1,\ldots,5\}$), such that $v_0v_k$ is the minor-edge at $v_k$. Now in $H$ contract all the vertices in $V(H)\setminus V(G^k)$ to become a new vertex $x$. Then this results a copy of $G$, consisting of a vertex $x$ and $G^k=G-x$. The orientation $D$ restricted to it provides a mod $3$-orientation $D_k$. Moreover, the edge $xx_0^k$ is a minor-edge at $x$ under $D_k$. If $D_k$ agrees with $D_x$ at $x$, then $D_k$ is a mod $3$-orientation extended from $D_x$, a contradiction. Otherwise, we reverse the orientation of all edges from $D_k$ to obtain another  mod $3$-orientation $D_k^*$. Now $D_k^*$ agrees with $D_x$ at $x$ since $xx_0^k$ is still the minor-edge at $x$ under $D_k^*$. This is a contradiction again, completing the proof of Theorem \ref{kochol}.
{\rule{3mm}{3mm}\par\medskip}

With a little more thought, one can  observe that in proving Theorem \ref{kochol}, if the graph $G$ is planar, then the constructed graph $H$ can be modified to planar as well, see similar construction in Figure \ref{FIGWZ3} below. (This is because the positions of $x_0,\ldots,x_4$ can be shifted cyclically in a planar embedding.)%\footnote{To Editors and Referees: We provide more details of this argument for the referee process, but not for publications to avoid repeated proofs.  In fact, if $G$ is a plane graph with $\partial_G(x)=\{xx_0,xx_1,\ldots,xx_4\}$. Then $x_0,\ldots,x_4$ are in a common face of the plane graph $G-x$. We can rotate the common face clockwise such that the minor-edge and its end are identifying to the position of $v_0v_j$ in the planar embedding. So the planarity is preserved in this construction of $H$. See Figure \ref{FIGWZ3} below for  similar constructions.} 
Thus we obtain the following corollary for planar graphs. It suggests that Gr{\"o}tzsch's 3CT is exactly equivalent to its restriction to girth $5$ case, a fact maybe not known before.

\begin{corollary}
  The following are equivalent versions of Gr\"{o}tzsch's 3CT.

  (a) Every triangle-free planar graph is $3$-colorable.

  (b) Every planar graph of girth $5$ is $3$-colorable.
\end{corollary}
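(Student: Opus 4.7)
The implication (a) $\Rightarrow$ (b) is immediate, since any planar graph of girth at least $5$ is in particular triangle-free. The plan for (b) $\Rightarrow$ (a) is to pass to planar duality and then invoke the planar restriction of Theorem \ref{kochol}.

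The first step is to translate both statements into nowhere-zero $3$-flow statements. After the standard reduction to $2$-connected graphs (a graph is $3$-colorable if and only if each of its blocks is), Tutte's classical duality gives that a $2$-connected simple planar graph $G$ is $3$-colorable if and only if its planar dual $G^*$ admits a nowhere-zero $3$-flow, and moreover the girth of $G$ equals the edge-connectivity of $G^*$. Consequently (a) is equivalent to ``every $4$-edge-connected planar graph admits a nowhere-zero $3$-flow,'' and (b) is equivalent to ``every $5$-edge-connected planar graph admits a nowhere-zero $3$-flow.''

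The second step is to observe that these two flow statements are equivalent, which is the planar restriction of (i) $\Leftrightarrow$ (ii) in Theorem \ref{kochol}. This planar version is exactly what the remark preceding the corollary asserts: in the proof of Theorem \ref{kochol}, when the $5$-regular hypothetical counterexample $G$ is itself planar, the auxiliary graph $H$ obtained by attaching six copies of $G-x$ to the frame graph $W$ of Figure \ref{FIGW} can be drawn in the plane. Indeed, $W$ is planar with $v_0$ in the center, and each copy $G^i$ of $G-x$ carries a planar embedding whose outer face exhibits the attachment vertices $x_0^i,\ldots,x_4^i$ in a cyclic order inherited from the cyclic order of edges at $x$ in $G$.

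The expected main obstacle is precisely this planarity bookkeeping: one must check that the cyclic arrangement of the five attachment slots at each peripheral copy $G^j$ (one edge to $G^0$ and two parallel edges each to its two neighbors among $G^1,\ldots,G^5$) can be realized by a cyclic shift of the $x_0^j,\ldots,x_4^j$, using the freedom to choose which edge at $x$ plays the role of the minor edge. Granting this, the corollary concludes as follows: if (b) holds then every $5$-edge-connected planar graph admits a nowhere-zero $3$-flow; the planar (ii) $\Rightarrow$ (i) implication promotes this to every $4$-edge-connected planar graph; and Tutte duality translates back to (a).
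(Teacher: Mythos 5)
Your argument is essentially the paper's: translate (a) and (b) via planar duality into the nowhere-zero $3$-flow statements for $4$- and $5$-edge-connected planar graphs, then invoke the planar restriction of Theorem~\ref{kochol} (i)$\Leftrightarrow$(ii), which the paper justifies by noting that the auxiliary graph $H$ built from the frame $W$ and six copies of $G-x$ stays planar. One small slip in your description of the bookkeeping: the planarity of $H$ is secured purely by the freedom to cyclically rotate (and reflect) the attachment slots $x_0^j,\ldots,x_4^j$ in a planar embedding of $G-x$, not by a choice of which edge is the minor edge --- that edge $xx_0$ is fixed by the given pre-orientation $D_x$ --- and the graph fed into the $W$-construction is the $5$-edge-connected planar graph (the $G_3$ produced by the $4$-cut reduction in the (iii)$\Rightarrow$(i) step), not the $5$-regular minimal counterexample directly.
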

By applying arguments dual to the proof above (using dual graph of $W$ and dual constructions), one may also show that Gr\"{o}tzsch's 3CT is also equivalent to the statement that any pre-coloring of a $5$-cycle in a triangle-free planar graph can be extended to a $3$-coloring of the entire graph, a useful strengthening theorem proved by Thomassen \cite{Thom94}.%\footnote{To Editors and Referees: This proof applies arguments dual to the proof of Theorem \ref{kochol}, and we provide the proof sketch for the referee process. First, we can define a {\em minor-vertex} in a $3$-coloring of a $5$-cycle to be the vertex received a color distinct with all other vertices. Then the minimal counterexample $G$ to the statement must have grith $5$ by Folding Lemma and a minimality argument. That is, there is a planar graph $G$ of girth $5$ with a pre-colored $5$-cycle $C$ that cannot be extended to a $3$-coloring of $G$. Let $W^*$ be the planar dual of $W$, which has six $5$-faces. Attached $6$ copies of $G$, and for each $5$-face of $W^*$, identify the $5$-cycle $C$ of a copy to it in such a way that the minor-vertices of all  copies lie in the common outer $5$-face of $W^*$. Then in a $3$-coloring of the constructed new graph, we can show that there exists a copy of $G$ whose minor-vertex of $C$ is the same as previous pre-coloring of $C$. By possibly permuting colors, this indicates that the pre-coloring of $5$-cycle $C$ can be extended to a $3$-coloring of $G$, a contradiction.}

% Problem: Is it true that every triangle-tree planar graph is homomorphic to $W*$? NO. Maybe other large graph $W$ for $k\ge 3$.

%\vspace{5cm}

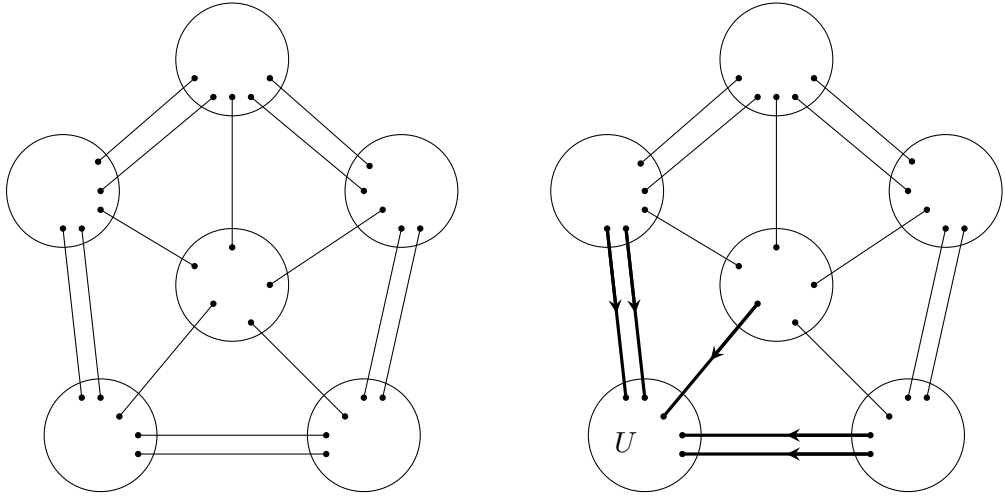
\begin{figure}[ht]
\centering
\begin{minipage}[ht]{0.4\textwidth}
  \begin{tikzpicture}[scale=0.5]
\tikzstyle{mynodestyle} = [draw,shape=circle,outer sep=0,inner sep=0.7,minimum size=2,fill=black]

\draw  (0,1) node (v4) {} ellipse (1.5 and 1.5);
\draw  (0,7) node (v5) {} ellipse (1.5 and 1.5);
\draw  (-4.5,3.5) node (v3) {} ellipse (1.5 and 1.5);
\draw  (4.5,3.5) ellipse (1.5 and 1.5);
\draw  (-3.5,-3) ellipse (1.5 and 1.5);
\draw  (3.5,-3) ellipse (1.5 and 1.5);

\node [mynodestyle] (v1) at (-4,-2) {};
\node [mynodestyle] (v19) at (-3.5,-2) {};
\node [mynodestyle] (v20) at (-3,-2.5) {};
\node [mynodestyle] (v22) at (-2.5,-3) {};
\node [mynodestyle] (v24) at (-2.5,-3.5) {};
\node [mynodestyle] (v6) at (2.5,-3.5) {};
\node [mynodestyle] (v25) at (2.5,-3) {};
\node [mynodestyle] (v23) at (3,-2.5) {};
\node [mynodestyle] (v26) at (3.5,-2) {};
\node [mynodestyle] (v28) at (4,-2) {};
\node [mynodestyle] (v8) at (5,2.5) {};
\node [mynodestyle] (v29) at (4.5,2.5) {};
\node [mynodestyle] (v27) at (4,3) {};
\node [mynodestyle] (v33) at (3.5,3.5) {};
\node [mynodestyle] (v31) at (3.6514,4.1651) {};

\node [mynodestyle] (v15) at (-1,6.5) {};

\node [mynodestyle] (v32) at (0,6) {};
\node [mynodestyle] (v30) at (0.5,6) {};
\node [mynodestyle] (v10) at (1,6.5) {};

\node [mynodestyle] (v18) at (-4.5,2.5) {};

\node [mynodestyle] (v14) at (-3.5,3.5) {};

\node [mynodestyle] (v12) at (-3.5651,4.2791) {};
\node [mynodestyle] (v2) at (-0.5,0.5) {};
\node [mynodestyle] (v7) at (0.5,0) {};
\node [mynodestyle] (v9) at (1,1) {};
\node [mynodestyle] (v11) at (0,2) {};
\node [mynodestyle] (v13) at (-1,1.5) {};
\node[mynodestyle](v35) at (-0.5,6) {};

\node [mynodestyle] (v17) at (-3.5,3) {};
\node [mynodestyle] (v21) at (-4,2.5) {};

\draw  (v18) edge (v1);

\draw  (v21) edge (v19);

\draw  (v2) edge (v20);
\draw  (v13) edge (v17);

\draw  (v12) edge (v15);

\draw  (v35) edge (v14);

\draw  (v32) edge (v11);
\draw  (v30) edge (v33);
\draw  (v10) edge (v31);
\draw  (v9) edge (v27);
\draw  (v29) edge (v26);
\draw  (v8) edge (v28);
\draw  (v7) edge (v23);
\draw  (v22) edge (v25);
\draw  (v24) edge (v6);
\end{tikzpicture}
\end{minipage}
\hspace{0.5cm}
\begin{minipage}[ht]{0.4\textwidth}
\begin{tikzpicture}[scale=0.5]
\tikzstyle{mynodestyle} = [draw,shape=circle,outer sep=0,inner sep=0.7,minimum size=2,fill=black]

\draw  (0,1) node (v4) {} ellipse (1.5 and 1.5);
\draw  (0,7) node (v5) {} ellipse (1.5 and 1.5);
\draw  (-4.5,3.5) node (v3) {} ellipse (1.5 and 1.5);
\draw  (4.5,3.5) ellipse (1.5 and 1.5);
\draw  (-3.5,-3) ellipse (1.5 and 1.5);
\draw  (3.5,-3) ellipse (1.5 and 1.5);

\node [mynodestyle] (v1) at (-4,-2) {};
\node [mynodestyle] (v19) at (-3.5,-2) {};
\node [mynodestyle] (v20) at (-3,-2.5) {};
\node [mynodestyle] (v22) at (-2.5,-3) {};
\node [mynodestyle] (v24) at (-2.5,-3.5) {};
\node [mynodestyle] (v6) at (2.5,-3.5) {};
\node [mynodestyle] (v25) at (2.5,-3) {};
\node [mynodestyle] (v23) at (3,-2.5) {};
\node [mynodestyle] (v26) at (3.5,-2) {};
\node [mynodestyle] (v28) at (4,-2) {};
\node [mynodestyle] (v8) at (5,2.5) {};
\node [mynodestyle] (v29) at (4.5,2.5) {};
\node [mynodestyle] (v27) at (4,3) {};
\node [mynodestyle] (v33) at (3.5,3.5) {};
\node [mynodestyle] (v31) at (3.6048,4.286) {};

\node [mynodestyle] (v15) at (-1,6.5) {};

\node [mynodestyle] (v32) at (0,6) {};
\node [mynodestyle] (v30) at (0.5,6) {};
\node [mynodestyle] (v10) at (1,6.5) {};

\node [mynodestyle] (v18) at (-4.5,2.5) {};

\node [mynodestyle] (v14) at (-3.5,3.5) {};

\node [mynodestyle] (v12) at (-3.6087,4.2302) {};
\node [mynodestyle] (v2) at (-0.5,0.5) {};
\node [mynodestyle] (v7) at (0.5,0) {};
\node [mynodestyle] (v9) at (1,1) {};
\node [mynodestyle] (v11) at (0,2) {};
\node [mynodestyle] (v13) at (-1,1.5) {};
\node[mynodestyle](v35) at (-0.5,6) {};

\node [mynodestyle] (v17) at (-3.5,3) {};
\node [mynodestyle] (v21) at (-4,2.5) {};

\draw [very thick] (v18) edge (v1);

\draw [very thick] (v21) edge (v19);

\draw [very thick] (v2) edge (v20);
\draw  (v13) edge (v17);

\draw  (v12) edge (v15);

\draw  (v35) edge (v14);

\draw  (v32) edge (v11);
\draw  (v30) edge (v33);
\draw  (v10) edge (v31);
\draw  (v9) edge (v27);
\draw  (v29) edge (v26);
\draw  (v8) edge (v28);
\draw  (v7) edge (v23);
\draw [very thick] (v22) edge (v25);
\draw [very thick] (v24) edge (v6);
\node (v16) at (-4.2,0) {};
\node (v34) at (-3.8,0.5) {};
\node (v36) at (-2,-1) {};
\node (v37) at (0,-3) {};
\node (v38) at (0,-3.5) {};
\draw [very thick,-stealth] (v18) edge (-4.25,0.25);
\draw [very thick, -stealth] (v21) edge (-3.75,0.25);
\draw [very thick,-stealth] (v2) edge (-1.75,-1);
\draw [very thick,-stealth] (v25) edge (v37);
\draw [very thick,-stealth] (v6) edge (v38);
\node [label={$U$}] at (-4,-4) {};

\end{tikzpicture}
\end{minipage}
\caption{The constructed graph for proving Theorem \ref{THM: TFAE} and Theorem \ref{THM:pla}(ii)(iii).}\label{FIGWZ3}
\end{figure}

~

Now we prove Theorem \ref{THM: TFAE} using similar constructions, but employing Lemma \ref{OB: W}(ii) instead. The argument presented here is a slight modification of that in the author's Ph.D dissertation\cite{JA18}.

{\bf Proof of Theorem \ref{THM: TFAE}:}
The relations of some of those statements have been investigated in  \cite{HLL18}. The proofs of ``(b-i)$\Leftrightarrow$(b-ii)'' and  ``(b-ii)$\Rightarrow$(c)$\Rightarrow$(a)'' have been presented in \cite{HLL18,JA18}. %\footnote{In fact, the whole proof of Theorem \ref{THM: TFAE} is presented in the author's Ph.D dissertation \cite{JA18}. The proofs of ``(b-i)$\Rightarrow$(d)'' and ``(a)$\Rightarrow$(b-i)'' here, sent to Journal for publication, are slightly different.}
 Clearly, we also have ``(d)$\Rightarrow$(a)''. We shall complete the proof of Theorem \ref{THM: TFAE} by showing ``(b-i)$\Rightarrow$(d)'' and ``(a)$\Rightarrow$(b-i)'' below.\\

{\em Proof of ``(b-i)$\Rightarrow$(d)'':} Let $G$ be a $4$-edge-connected graph with at most five $4$-edge-cuts. Denote $A_1, A_2, \dots, A_t$ to be all distinct $4$-critical-sets $A$ such that $\partial(A)$ is a $4$-critical-cut. Then $t\le 5$ by Observation \ref{OB: kedgeconnected}. The conclusion is clear if $t=0$. We may assume $1\le t\le 5$. Construct a new graph $G'$ from $G$ by adding a new vertex $z$, connecting $z$ and $A_1$ with $6-t$ new edges,  and connecting $z$ and $A_i$ with a new edge for each $i=2, \ldots, t$. Then $d_{G'}(z)=5$ and $G'$ is $5$-edge-connected by Observation \ref{OB: kedgeconnected} (iv). By the validity of Theorem \ref{THM: TFAE} (b-i), $G'$ is $\Z_3$-extendable at $z$. Then it follows from Lemma \ref{extendingiff} that $G=G'-z$ is $\Z_3$-connected. This proves ``(b-i)$\Rightarrow$(d)''.\\

{\em Proof of ``(a)$\Rightarrow$(b-i)'':} Suppose to the contrary that $G$ is a $5$-edge-connected graph which
 is not $\Z_3$-extendable at a given $5$-vertex $z$. By Proposition \ref{extendingiff}, $G-z$ is not $\Z_3$-connected, and thus $G-z$ has no $\beta_1$-orientation for some boundary function $\beta_1$ of $G-z$. Denote $\partial(z)=\{zu_1,zu_2,\ldots,zu_5\}$. (Note that $u_i$, $u_j$ may represent the same vertex for distinct $i$ and $j$ when $\partial(z)$ contains parallel edges.) We orient the edge $zu_i$ from $z$ to $u_i$ for each $i=1,\ldots, 5$ to obtain a pre-orientation $D_{z}$. Let $\beta$ be a boundary function of $G$ such that $\beta(z)=2$ and $\beta(x)=\beta_1(x)-\alpha(x)$ in $\mathbb{Z}_3$ for any $x\in V(G)\setminus\{z\}$, where $\alpha(x)$ is the number of directed edges from $z$ to $x$. (In particular, $\beta(x)=\beta_1(x)$ in $\mathbb{Z}_3$ for any $x\in V(G)-\cup_{i=1}^5\{u_i\}\cup\{z\}$.) Clearly, $\beta\in Z(G, \Z_3)$ and
 \begin{equation}\label{EQ:Dznotextend}
  \text{$D_{z}$ cannot be extended to a $\beta$-orientation of $G$.}
\end{equation}

 Now, we replace each vertex of the graph $W$ (see Figure \ref{FIGW}) with a copy of $G-z$, where each $u_i$ is connected with an edge of $W$ (see Figure \ref{FIGWZ3}). Let $H$ be the resulting graph. Define a boundary function $\beta^*$ of $H$ such that  $\beta^*$ is consistent with $\beta$ in each copy of $G-z$. Note that $\beta^*$ is indeed a boundary function of $H$ as $\sum_{v\in V(H)}\beta^*(v)=6\sum_{v\in V(G-z_0)}\beta(v)\equiv 0\pmod 3$. Since $H$ is $5$-edge-connected, we have a $\beta^*$-orientation $D^*$ of $H$ by the validity of Theorem \ref{THM: TFAE} (a). Under the orientation $D^*$, we consider the oriented graph $W$ obtained from $H$ by contracting all the copies of $G-z$. By Lemma \ref{OB: W}(ii), there exists a vertex with indegree $5$. We uncontract this vertex and denote its corresponding vertex set of $H$ by $U$. Then $H/U^c$ is isomorphic to $G$, where the contracted vertex $y$ plays the same role as $z$. Furthermore, the orientation $D^*$ restricted to $H/U^c$ gives a $\beta$-orientation of $H/U^c$ since all the edges incident with $y$ are directed out of $y$. This contradicts to (\ref{EQ:Dznotextend}) that $D_{z}$ cannot be extended to a $\beta$-orientation of $G$. The proof is completed.
{\rule{3mm}{3mm}\par\medskip}

~

Now we prove Theorem \ref{THM:pla} using similar arguments as in the proof of Theorem \ref{THM: TFAE}.

{\bf Proof of Theorem \ref{THM:pla}:} The proof of ``(i)$\Rightarrow$(ii)'' is the same as the proof of Theorem \ref{THM: TFAE} ``(a)$\Rightarrow$(b-i)'' above. Notice that when $G$ is planar, the new constructed graph $H$ from $W$ and copies of $G-x$ is also planar, and hence  ``(i)$\Rightarrow$(ii)'' holds.   The proof of ``(ii)$\Rightarrow$(iii)'' is also straightforward by employing Lemma \ref{extendingiff}, similar as proving Theorem \ref{THM: TFAE} ``(b-ii)$\Rightarrow$(c)'' in \cite{HLL18}. If there exists a $\Z_3$-reduced graph with minimal degree at least $5$, we choose a vertex set $S$ such that $\partial(S)$ is a $4$-critical-set. Then $|S|\ge 2$, and contract $S^c$ to obtain a graph $G_1=G/S^c$, where $x$ is the vertex set $S^c$ contracted into. Add $5-|\partial_G(S)|$ edge between $x$ and $S$ in $G_1$ to result a new planar graph $G_2$. Hence $G_2$ is $5$-edge-connected. By (ii), $G_2$ is $\Z_3$-extendable at $x$, which shows that $G[S]=G_2-x$ is $\Z_3$-connected by Lemma  \ref{extendingiff}, a contradiction to the fact that $G$ is $\Z_3$-reduced.

Now we prove ``(ii)$\Rightarrow$(iv)'' with similar arguments. Let $G$ be a $5$-edge-connected graph embedded on the plane such that the only crossing is between $x_1x_2$ and $y_1y_2$. We delete  edges $x_1x_2,y_1y_2$ and add a new vertex $z$ with edges $zx_1,zx_2, zy_1,zy_2,zy_2$. Let $G'$ be the resulting graph. Then $G'$ is a $5$-edge-connected planar graph with a $5$-vertex $z$. By (ii), $G'$ is $\Z_3$-extendable at $z$, and hence $G'-z=G-x_1x_2-y_1y_2$ is $\Z_3$-connected by Lemma  \ref{extendingiff}. Thus $G$ is $\Z_3$-connected. This completes the proof of Theorem \ref{THM:pla}.
{\rule{3mm}{3mm}\par\medskip}

One may wonder whether the proof of Theorem \ref{THM:pla} extends to the ``doublecross graphs'', graphs can be drawn in the
plane with two crossings incident with the infinite region.  We are unable to reduce it to planar case as in Theorem \ref{THM:pla}. Similar phenomenon happens for Four Color Theorem(4CT) of planar graphs.  Jaeger \cite{Jaeg80} proved that every bridgeless cubic graph with at most one crossing has a nowhere-zero $4$-flow (equivalently, is $3$-edge-colorable), which is reduced to the planar case, an equivalent version of 4CT, that every bridgeless cubic planar graph has a nowhere-zero $4$-flow. However, for doublecross cubic graphs, Edwards, Sanders, Seymour and Thomas \cite{ESST16} employed the whole arguments of 4CT proofs (and many more works) to accomplish their proof that every bridgeless doublecross cubic graph has a nowhere-zero $4$-flow.

\section{Graphs with Few Small Critical-cuts}
We prove Theorem \ref{THM: ETFH} (d-i)(d-ii)  in this section. Evidently, Theorem \ref{THM: ETFH} (d-ii) is easily derived by Theorem \ref{THM: ETFH} (b-i) and Observation \ref{OB: kedgeconnected}. However, Theorem \ref{THM: ETFH} (d-i) seems not to be deduced from the current version of Theorem \ref{THM: ETFH} (b-i). We shall apply the full version of the flow extension theorem of Lov\'{a}sz et al. \cite{LTWZ13}.

Let $G$ be a graph and $\beta$ a boundary function. For a vertex set $A\subset V(G)$, denote its boundary $\beta(A)\equiv \sum_{x\in A}\beta(x)\pmod3$. Define an integer valued mapping
$\tau : 2^{V(G)}  \mapsto \{0,\pm 1, \pm 2, \pm 3\}$  such that,  for each vertex set $A\subset V(G)$, $\tau (A) \equiv d(A) \pmod 2$ and $\tau (A) \equiv \beta(A) \pmod 3$.
%\begin{eqnarray*} %\label{tx}
%  \tau (A) \equiv \left\{\begin{array}{ll}
%d(A) \pmod 2;\\
%\beta(A)\pmod {3}.
%\end{array}
%\right.
%\end{eqnarray*}

\begin{theorem}\label{partialextending}
{\em(Lov\'{a}sz et al. \cite{LTWZ13})}
Let $G$ be a graph, $\beta\in Z(G,\Z_3)$ and $z \in V(G)$.
Let $D_{z}$ be a pre-orientation of $\partial_G({z})$. Assume that
\\
(i) $|V(G)|\ge 3$,
\\
(ii) $d(z)\le 4 + |\tau(z)|$ and $d_{D_z}^+(z)-d_{D_z}^-(z)\equiv \beta(z) \pmod 3$, and
\\
(iii) $d(A)\ge 4+ |\tau(A)|$ for each nonempty $A\subseteq V(G)-\{z_0\}$ with $|V(G)- A|\ge 2$.
\\
Then $D_{z}$ can be extended to a $\beta$-orientation of the entire graph $G$.
\end{theorem}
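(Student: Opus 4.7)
The plan is to prove this by strong induction on $|V(G)| + |E(G)|$, following the approach of Lov\'asz, Thomassen, Wu and Zhang. The base case handles small graphs with $|V(G)| = 3$ directly: conditions (ii)--(iii) then force a very restricted multiplicity pattern on $\partial(z)$ and the single other edge, and a short case analysis on the parities and $\beta$-values produces the extension.

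For the inductive step, the plan is to employ two complementary types of reduction. The first is \emph{contraction}: if $G$ contains a proper subgraph $H$ with $z \notin V(H)$ that is itself $\Z_3$-connected (say, three parallel edges between two non-$z$ vertices, or more generally any non-trivial $\Z_3$-connected configuration), then contract $H$, update $\beta$ on the contracted vertex by $\sum_{x\in V(H)}\beta(x)\bmod 3$, and apply induction on $G/H$; any resulting $\beta'$-orientation extending $D_z$ lifts back to $G$ via the $\Z_3$-connectivity of $H$, and one checks that condition (iii) is preserved because contracting a subgraph can only increase cut sizes. The second reduction is \emph{admissible splitting} at a vertex $v \neq z$: by (iii) applied to $\{v\}$, we have $d(v) \ge 4 + |\tau(v)|$, giving enough room to delete a pair $vu_1, vu_2$ and insert $u_1u_2$ (after adjusting $\beta$ on $v, u_1, u_2$ to preserve the boundary condition) and then induct on the resulting smaller graph.

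The principal obstacle, and where the real work lies, is showing that an admissible splitting at some vertex $v \neq z$ exists whenever no contraction reduction applies. A splitting is inadmissible precisely when some set $A$ with $u_1 \notin A$, $u_2 \in A$, $v \notin A$ becomes \emph{tight} after the split, i.e.\ violates $d(A) \ge 4 + |\tau(A)|$. The approach is to suppose no pair of edges at $v$ admits an admissible split, thereby producing a rich family $\mathcal{F}$ of tight sets $A$ with $d(A) = 4 + |\tau(A)|$ whose intersection pattern at $v$ separates every pair of neighbours. One then exploits the submodular inequality
\[
d(X) + d(Y) \ge d(X \cap Y) + d(X \cup Y),
\]
together with the compatible inequality $|\tau(X)| + |\tau(Y)| \ge |\tau(X\cap Y)| + |\tau(X\cup Y)|$ arising from the parity and mod-$3$ constraints, to show that crossing members of $\mathcal{F}$ can be replaced by their uncrossings while remaining tight. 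A structural analysis of an uncrossed laminar family either produces a smaller tight set which itself supplies a contraction reduction (contradicting the no-reduction assumption), or pins down enough edges at $v$ to exhibit a $\Z_3$-connected subgraph avoiding $z$, again contradicting the setup.

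Once an admissible splitting at $v$ is found, the inductive hypothesis applied to the smaller graph and the updated boundary function yields a $\beta''$-orientation extending $D_z$; ``unsplitting'' the edge $u_1u_2$ back into $vu_1, vu_2$ with the inherited directions then produces the desired $\beta$-orientation of $G$. The delicate bookkeeping is the simultaneous tracking of the parity ingredient $d(A) \bmod 2$ and the mod-$3$ ingredient $\beta(A)$ packaged into $\tau$; it is this packaging, rather than the induction itself, that drives the sharpness of the $4+|\tau(A)|$ bound and makes the theorem strong enough to yield the $6$-edge-connected case and the extensions used repeatedly in Section 3.
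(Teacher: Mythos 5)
The paper does not prove Theorem~\ref{partialextending} at all: it is quoted from Lov\'asz--Thomassen--Wu--Zhang \cite{LTWZ13} and used as a black box in Section~3. So the only question is whether your argument stands on its own, and as written it does not. It is a plan rather than a proof: the base case, the verification that the reductions preserve condition (iii), the termination of the induction, and above all the step you yourself call ``the principal obstacle, where the real work lies'' are described but never carried out.

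More seriously, the one concrete tool you offer for that hard step is false. You assert the inequality $|\tau(X)|+|\tau(Y)|\ge|\tau(X\cap Y)|+|\tau(X\cup Y)|$ as ``arising from the parity and mod-$3$ constraints.'' From the definition, $|\tau(A)|$ depends only on the pair $\bigl(d(A)\bmod 2,\ \beta(A)\bmod 3\bigr)$ and takes the values $0,2,2,3,1,1$ on $(0,0),(0,1),(0,2),(1,0),(1,1),(1,2)$ respectively. Now take crossing sets $X,Y$ with $d(X),d(Y)$ odd and $\beta(X)\equiv\beta(Y)\equiv 1$, so $|\tau(X)|+|\tau(Y)|=2$, while $d(X\cap Y),d(X\cup Y)$ are even and $\beta(X\cap Y)\equiv\beta(X\cup Y)\equiv 1$; these congruences are mutually consistent because $d(X)+d(Y)\equiv d(X\cap Y)+d(X\cup Y)\pmod 2$ and $\beta(X)+\beta(Y)=\beta(X\cap Y)+\beta(X\cup Y)$ in $\Z_3$. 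Then $|\tau(X\cap Y)|+|\tau(X\cup Y)|=4>2$. In fact neither your inequality nor its reverse holds in general, so the demand function $4+|\tau(\cdot)|$ is not crossing supermodular and a Mader/Frank-style ``admissible splitting at $v\ne z$'' argument cannot be run off the shelf; your proposed uncrossing of the family $\mathcal{F}$ collapses exactly where the sharpness of the bound matters. This is precisely the difficulty the actual proof in \cite{LTWZ13} is built to sidestep: there, a \emph{single} nontrivial tight set $A$ with $z\notin A$ and $d(A)=4+|\tau(A)|$ already yields a reduction --- contract $A$, apply induction to $G/A$, then feed the resulting orientation of $\partial(A)$ back as a pre-orientation of the contracted vertex in $G/A^c$ and apply induction again --- so no uncrossing of tight sets against one another is ever required; and the subsequent edge removals are performed at the special vertex $z$ itself (deleting its pre-oriented edges one at a time and absorbing a neighbour into $z$ when it becomes critical), not by splitting at other vertices. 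To obtain a complete proof you should follow that route; as it stands, your argument has a genuine gap at its central step.
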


Now we are ready to prove Theorem \ref{THM: ETFH} (d-i)(d-ii) using Theorem \ref{partialextending}.

{\bf Proof of Theorem \ref{THM: ETFH}.} {\em Proof of (d-i):} Let $G$ be a $4$-edge-connected graph with at most five $4$-cuts and without $5$-cuts. Let $\beta\in Z(G,\Z_3)$ be a boundary function of $G$. We are going to show that $G$ has a $\beta$-orientation.   Similar to the previous section, we denote $A_1, A_2, \dots, A_t$ to be all distinct  $4$-critical-sets of $G$. Note that $t\le 5$ by Observation \ref{OB: kedgeconnected}. Construct a new graph $G'$ from $G$ by adding a new vertex $z$, and for each $i=1, \ldots, t$, adding a new edge between $z$ and $A_i$, say $zv_i$ (where $v_i\in A_i$). We pre-orient the edges in $\partial_{G'}(z)$ and modify the boundary appropriately to become a new boundary $\beta'$ of $G'$ such that $d_{G'}(A_i)=4+|\tau'(A_i)|$ for each $i=1, \ldots, t$, where $\tau'$ denotes the $\tau$-function corresponding to boundary $\beta'$ in $G'$. Specifically, we orient the edge $zv_i$ from $z$ to $v_i$ if $\tau(A_i)=0$ or $2$, and orient $zv_i$ from $v_i$ to $z$ otherwise(i.e. $\tau(A_i)=-2$).  Define the boundary  $\beta'$ of $G'$ as follows. For any $x\in V(G')\setminus\{v_1,\ldots,v_t\}$, define $\beta'(x)=\beta(x)$; for each $i=1, \ldots, t$, define $\beta'(v_i)=\beta(v_i)+1$ if $zv_i$ is oriented from $v_i$ to $z$, and $\beta'(v_i)=\beta(v_i)-1$ otherwise. Now, it is easy to see that   $d_{G'}(A_i)=4+|\tau'(A_i)|$ for each $i=1, \ldots, t$, and that Theorem \ref{partialextending} is applied for $G'$ by checking conditions (i)(ii)(iii). That is, we have $d_{G'}(z)\le 4 + |\tau'(z)|$ since $d_{G'}(z)\le 5$ and by parity, and this verifies condition (ii) of Theorem \ref{partialextending}.  Let $A$ be a nonempty subset of $V(G')-\{z\}$ with $|V(G')- A|\ge 2$.  If $d_{G'}(A)\ge 6$, then we have $d_{G'}(A)\ge 4+ |\tau'(A)|$ by parity. Otherwise, we have $A=A_i$ for some $i$, and so $d_{G'}(A)= 4+ |\tau'(A)|$. Hence condition (iii) of Theorem \ref{partialextending} holds. By Theorem \ref{partialextending}, the pre-orientation can be extended to a $\beta'$-orientation $D'$ of $G'$. Notice that $D'$ restricted to $G$ provides a $\beta$-orientation of $G$. This proves (d-i).

{\em Proof of (d-ii):} The proof of (d-ii) is analogous to the proof of Theorem \ref{THM: TFAE} ``(b-i)$\Rightarrow$(d)''. We add a new vertex $z$ to connect  each $5$-critical-set to obtain a new graph $G'$ such that $d_{G'}(z)=7$. Then $G=G'-z$ is $\Z_3$-connected by Theorem \ref{THM: ETFH} (b-i) and Lemma \ref{extendingiff}. This completes the proof.
{\rule{3mm}{3mm}\par\medskip}

Note that, by Observation \ref{OB: kedgeconnected} the proof above is still valid for graphs with many $5$-cuts but only at most seven $5$-critical-cuts,  with essentially the same proof.
\begin{corollary}
 Every $5$-edge-connected graph with at most seven $5$-critical-cuts is $\Z_3$-connected.
\end{corollary}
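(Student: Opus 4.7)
The plan is to adapt the proof of Theorem \ref{THM: ETFH}(d-ii) almost verbatim, with the observation that the construction used there depends only on the number of $5$-critical-cuts, not on the total number of $5$-cuts. Let $G$ be a $5$-edge-connected graph with at most seven $5$-critical-cuts. Denote by $A_1,\ldots,A_t$ the distinct $5$-critical-sets of $G$. Observation \ref{OB: kedgeconnected}(iii) gives $t\le 7$, and Observation \ref{OB: kedgeconnected}(i) guarantees that they are pairwise disjoint.

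If $t=0$ then every proper non-trivial vertex subset has $d(S)\ge 6$, so $G$ is $6$-edge-connected and $\Z_3$-connected by Theorem \ref{THM: ETFH}(a). Otherwise, I would construct a graph $G'$ from $G$ by adjoining a new vertex $z$ and drawing exactly seven new edges from $z$ to $V(G)$, distributed so that each $A_i$ receives at least one such edge and the remaining $7-t$ edges are attached to any of the $A_i$'s (say $A_1$). By Observation \ref{OB: kedgeconnected}(iv) applied with $k=5$, every edge-cut of $G'$ other than $\partial_{G'}(z)$ has size at least $6$; the extra padding edges can only raise cut sizes, so the conclusion is preserved. Since $d_{G'}(z)=7\ge 6$ as well, $G'$ is $6$-edge-connected.

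Now Theorem \ref{THM: ETFH}(b-i) applies to $G'$ at the $7$-vertex $z$, giving that $G'$ is $\Z_3$-extendable at $z$. Lemma \ref{extendingiff} then yields that $G=G'-z$ is $\Z_3$-connected, completing the proof.

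There is really no serious obstacle here: the argument is a bookkeeping refinement of the proof of (d-ii). The only point worth emphasizing is why replacing ``at most seven $5$-cuts'' by ``at most seven $5$-critical-cuts'' is harmless: the construction of $G'$ attaches $z$ only to the critical sets, and Observation \ref{OB: kedgeconnected}(iv) shows that absorbing precisely these critical sets is enough to lift \emph{every} remaining cut to size $\ge 6$, including the non-critical $5$-cuts of $G$ (which by definition must contain some critical $A_i$ on one side and hence pick up the new edge $zv_i$).
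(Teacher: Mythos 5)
Your proof is correct and takes essentially the same route as the paper: the paper's own proof of the corollary is just the remark that the argument for Theorem~\ref{THM: ETFH}(d-ii) carries over verbatim because the construction attaches $z$ only to the $5$-critical-sets, and you have simply spelled out the bookkeeping. The one place you are slightly loose is in citing Observation~\ref{OB: kedgeconnected}(iii) directly for $t\le 7$ (it is stated in terms of the number of $5$-cuts, not critical cuts), but the intended deduction -- that distinct critical sets yield distinct critical cuts when there are at least two $5$-cuts, so $t$ is bounded by the number of $5$-critical-cuts, and $t\le 2$ otherwise -- is immediate and the conclusion holds.
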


%\section{Some Remarks~~~Group connectivity and claw decomposition}
%{\bf add} Lai in cite also provided another equivalent version of Conj1.3 using claw decomposition. Conj1.3 is equivalent to every 5-edge-connected simple graph has a claw decomposition provided that its number of edges is a multiple of 3. In fact, LWTZ shows that every 5-edge-connected essentially 6-edge-connected simple graph has a claw decomposition provided that its number of edges is a multiple of 3.\footnote{move to the place after proof of seven 5-edge-cut}
%
%In fact, LWTZ shows that every 9-edge-connected essentially 12-edge-connected simple graph has a $K_{1,5}$ decomposition provided that its number of edges is a multiple of 5. If essentiall 12-edge-connected can be removed, then 5-flow conjecture is true.
%
%
%true  if 5 k-critical cuts replacing 5 k-ege-cuts.
%
%{\bf Kochol's recent result , (a,b,c,d)-connected....blablabla}
%
%

%\begin{spacing}{0.8}
%{\small

%}
%\end{spacing}
\end{document}